\theoremstyle{plain}
\newtheorem*{theorem*}{Theorem}
\newtheorem{theorem}{Theorem}[section]
\newtheorem{proposition}[theorem]{Proposition}
\newtheorem*{claim*}{Claim}
\newtheorem{conjecture}[theorem]{Conjecture}
\newtheorem{problem}[theorem]{Problem}
\newtheorem{question}[theorem]{Question}
\theoremstyle{remark}
\newcommand{\F}{\mathcal{F}}
\newcommand{\G}{\mathcal{G}}
\let\emptyset\varnothing
\let\originalleft\left
\let\originalright\right
\renewcommand{\left}{\mathopen{}\mathclose\bgroup\originalleft}
\renewcommand{\right}{\aftergroup\egroup\originalright}
\begin{document}

\title{Refuting conjectures in extremal combinatorics via linear programming}
\author{Adam Zsolt Wagner}
\thanks{Department of Mathematics, ETH, Z\"urich, Switzerland. Email:
\href{mailto:zsolt.wagner@math.ethz.ch} {\nolinkurl{zsolt.wagner@math.ethz.ch}}.}

\maketitle
\begin{abstract}
    We apply simple linear programming methods and an LP solver to refute a number of open conjectures in extremal combinatorics.

\end{abstract}

\section{Introduction}

Conjectures in extremal combinatorics are often intricate -- it can be easy to miss a better construction than the one we have, or to misjudge whether the conjecture is true or false for other reasons. Any general method that can tell us whether a statement is likely to be true or false can be extremely useful in practice.

In the present manuscript we argue that the use of linear programming and LP solvers is such a method in extremal combinatorics. We hope to convince the reader of its usefulness by using it to resolve a number of open conjectures, questions and problems from a variety of areas.
\begin{itemize}
    \item In Section~\ref{sec:linbas} we give the basics of linear programming and several examples on how to phrase questions in extremal combinatorics as linear programs.
    \item In Section~\ref{sec:results} we give our main results. 
    \begin{itemize}
        \item In~\ref{subsec:anti} we disprove a claim and a conjecture of Frankl on the size of antichains of fixed diameter.
        \item In~\ref{subsec:int} we disprove several conjectures of Frankl and Huang on the diversity of intersecting set systems.
        \item In~\ref{subsec:kat} we disprove two conjectures of Katona, and one conjecture of Frankl et al, on  multipartite generalizations of the Erd\H{o}s--Ko--Rado theorem.
        \item In~\ref{subsec:anstee} we solve a problem of Anstee related to a forbidden configuration in set systems.
        \item In~\ref{subsec:hoffman} we answer a question of Ihringer--Kupavskii on regular set systems achieving a Hoffman-type bound.
         \item In~\ref{subsec:kleitman} we disprove a conjecture of Frankl--Tokushige related to the Kleitman matching problem.
        \item In~\ref{subsec:aha} we disprove a conjecture of Aharoni--Howard on bipartite graphs without rainbow matchings.
        \item In~\ref{subsec:jessica} we improve a construction given by De Silva et al related to a Tur\'an-type problem.
    \end{itemize}
\end{itemize}

\section{Basics and examples of linear programming in combinatorics}\label{sec:linbas}

Linear programming is a method for the optimization of a linear objective function subject to linear inequality constraints. One example of a linear program (LP) is the following:

\begin{equation*}
\begin{matrix}
\displaystyle \text{Minimize} & 2x + 3y - 5z  \\

\textrm{Subject to} & x + z & \leq & 3   \\
& x + y - 3z & \leq & -1  \\
& x,y,z & \geq & 0 & &
\end{matrix}
\end{equation*}

If in an LP all variables are constrained to be integers, the LP is also called an \emph{Integer Program} (IP)\footnote{In fact, we will only use 0-1 valued variables in all IPs throughout this paper.}. All our results will be based on the following two simple observations:
\begin{enumerate}
    \item Many large LPs and IPs can be very efficiently and quickly solved using commercially available LP solvers.
    \item Many conjectures in extremal combinatorics can be phrased as an LP/IP.
\end{enumerate}
Let us now see several examples of how IPs arise naturally in extremal combinatorics. If the reader knows how to code and is new to LP solvers, we would strongly encourage them to implement some of these examples in practice by writing a short script to generate the IPs and solving them with an LP solver. It is difficult to think of a more efficient way to learn the basics of this method than to implement some explicit examples ourselves.

We note that nothing about this method is new. Given the large volume of counterexamples to open conjectures we have found however, we would argue that this technique is not as widely known and used as it could be. Hence we aimed to structure this paper as a guide or tutorial with many examples -- the goal is that the reader will, after working through some of these examples, be able to easily spot when LP solvers could be useful for their own research problems. While most results presented in this paper are counterexamples, this method is also useful for proving results by virtue of knowing whether a statement or an intermediate conjecture is likely to be true or not.

~

\subsection{Example: Sperner's theorem.}\label{subsec:sperner}
As a baby example, consider the problem of finding the largest antichain in the Boolean lattice $2^{[n]}$, for some fixed $n$. A family $\mathcal{F}\subset 2^{[n]}$ is an \emph{antichain} if there are no distinct $A,B\in\mathcal{F}$ with $A\subset B$. This problem can be phrased as an IP as follows. We will have for each set $A\subseteq [n]$ an indicator variable $x_A\in \{0,1\}$, which will take value 1 if $A\in\mathcal{F}$ and 0 otherwise. The fact that $\mathcal{F}$ is an antichain can be encoded by adding for each pair $A,B$ with $A\subsetneq B$ a constraint $x_A + x_B \leq 1$:

\begin{equation*}
\begin{matrix}
\displaystyle \text{Maximize} & \sum_{A\subseteq [n]} x_A \\
\textrm{Subject to} & x_A + x_B & \leq & 1  & ~ \text{for }\forall A,B : A\subsetneq B\subseteq [n] 
\end{matrix}
\end{equation*}

~

Hence this IP has $2^n$ variables and has $3^n-2^n$ linear constraints of the type $x_A+x_B\leq 1$. We will use the LP solver Gurobi~\cite{gurobi} throughout this paper, running on the author's commercially available, average laptop. For $n=10$ it takes around three seconds to produce the correct value of $252=\binom{10}{5}$. If one did not know about Sperner's theorem~\cite{sperner} this would be evidence that the natural guess, saying that the largest layer is the largest antichain, is correct. It is easier to prove a conjecture in practice if one is convinced that it is true. This example may sound trivial, but with only a tiny modification it leads us to new observations.

~

\subsection{Example: a conjecture of Falgas-Ravry.} Given a graph $G$ with $V(G)=[n]$, say a set $A\subseteq [n]$ is \emph{$G$-independent} if the induced subgraph $G[A]$ contains no edges. Denote by $Q(G)$ the poset of all $G$-independent sets under containment, so that if $G$ is the empty graph then $Q(G)$ is the usual Boolean lattice. Denote by $Q^{(r)}(G)$ the set of all $G$-independent sets of size $r$. Denote by $s(G)$ the size of the largest antichain in $Q(G)$. Falgas-Ravry asked, for which $G$ is it true that $s(G)=\max_{0\leq r\leq n} |Q^{(r)}(G)|$? He made the following conjecture. 
\begin{conjecture}[\label{conj:falgas}Falgas-Ravry~\cite{falgas}]
Let $P_n$ denote the path on $n$ vertices. Then
$$s(P_n)=\max_{0\leq r \leq n}|Q^{(r)}(P_n)|.$$
\end{conjecture}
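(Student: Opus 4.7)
The lower bound $s(P_n)\geq \max_r |Q^{(r)}(P_n)|$ is immediate by taking $\F$ to be any single layer, so the real content is the matching upper bound. My strategy will be the standard route for Sperner-type theorems: establish the \emph{normalized matching property} (NMP) between consecutive layers of $Q(P_n)$ and deduce the LYM inequality, from which the bound follows.

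First I will set up the encoding: identify independent sets of $P_n$ with binary strings of length $n$ containing no two consecutive $1$'s. A standard bijection yields $|Q^{(r)}(P_n)|=\binom{n-r+1}{r}$, and a direct check shows this sequence is log-concave in $r$, hence unimodal, with a unique peak near $r^{*}\approx n/\phi^2$ where $\phi=(1+\sqrt{5})/2$. I will then aim to prove that for every $r\geq 1$ and every $\A\subseteq Q^{(r)}(P_n)$,
\[
\frac{|\partial \A|}{|Q^{(r-1)}(P_n)|}\;\geq\;\frac{|\A|}{|Q^{(r)}(P_n)|},
\]
where $\partial\A$ denotes the set of elements of $Q^{(r-1)}(P_n)$ contained in some member of $\A$. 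Summing these inequalities along any antichain in $\F$ gives the LYM bound and then $s(P_n)\leq \max_r|Q^{(r)}(P_n)|$.

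The main workhorse will be induction on $n$, exploiting the natural decomposition coming from whether the last vertex $n$ is in the set. The sub-poset of independent sets avoiding $n$ is isomorphic to $Q(P_{n-1})$, while the sub-poset of sets containing $n$ (and hence not $n-1$) is isomorphic to $Q(P_{n-2})$ with all ranks shifted up by one. By induction both sub-posets satisfy NMP, and the task is to combine them with the ``interface'' down-edges (deletion of the element $n$) so that NMP still holds between the layers of $Q(P_n)$.

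The main obstacle, I expect, is exactly this combining step. The rank shift between the two sub-posets is $1$, while the layer sizes $\binom{n-r+1}{r}$ are \emph{not} symmetric about their peak, so the two inductive inequalities will not simply add. Any weighted combination or explicit injection of layers à la Hall's theorem must absorb this asymmetry, and it is not clear that such a combination is available in general. Indeed, if NMP fails at some rank for some small $n$, then the conjecture itself is false; the virtue of the LP approach promoted in this manuscript is that one can first compute $s(P_n)$ exactly for small $n$ and compare to $\max_r\binom{n-r+1}{r}$ before investing in a combinatorial argument that may not exist.
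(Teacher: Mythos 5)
First, a point of calibration: the statement you are proving is an \emph{open conjecture}. The paper does not prove it; it only verifies it computationally for all $n\leq 24$ by encoding the largest-antichain problem in $Q'(P_n)$ as an integer program (using Falgas-Ravry's reduction to the layers $(n-1)/4<r<(n+2)/3$) and running an LP solver. So there is no ``paper proof'' to match your argument against, and any correct general argument you produced would be a new theorem.

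That said, your proposed route cannot work, because the key lemma you plan to prove is false. The LYM inequality (and hence the normalized matching property, which implies it) already fails for $Q(P_3)$. The independent sets of $P_3=1\text{--}2\text{--}3$ are $\emptyset;\ \{1\},\{2\},\{3\};\ \{1,3\}$, so the layer sizes are $1,3,1$. The pair $\{\{2\},\{1,3\}\}$ is an antichain, and
\[
\frac{1}{|Q^{(1)}(P_3)|}+\frac{1}{|Q^{(2)}(P_3)|}=\frac13+1=\frac43>1 .
\]
Equivalently, NMP fails between ranks $1$ and $2$: taking $\mathcal{A}=\{\{2\}\}$, the up-neighbourhood of $\mathcal{A}$ in $Q^{(2)}(P_3)$ is empty, so $|N(\mathcal{A})|/|Q^{(2)}|=0<1/3=|\mathcal{A}|/|Q^{(1)}|$. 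So the ``combining step'' you flag as the main obstacle is not merely hard to execute --- the statement it is meant to establish is false, and no inductive decomposition can rescue it. Your closing hedge is also off: you write that if NMP fails then the conjecture itself is false, but NMP/LYM is strictly \emph{stronger} than the Sperner property. Indeed $Q(P_3)$ violates LYM yet is Sperner (its largest antichain is the middle layer of size $3$), and Falgas-Ravry's computations (extended in this paper to $n\leq 24$) show no counterexample to the Sperner conclusion. Any proof of the conjecture must therefore use a technique that does not factor through LYM --- for instance Falgas-Ravry's localization of the maximum antichain to a narrow band of layers combined with a finer analysis, or some ad hoc chain decomposition --- and as far as this paper is concerned the general case remains open.
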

He proved that (one of) the largest antichains must lie in $$Q'(G):=\bigcup_{(n-1)/4 < r < (n+2)/3}Q^{(r)}(P_n),$$
and using this he proved Conjecture~\ref{conj:falgas} for $n\leq 10$ and stated that the $n=11$ case does not look amenable to a pure brute-force search. Using the methods of this paper we can verify his conjecture for all $n\leq 24$.

We phrase this problem as an IP in essentially the same way as we did with Sperner's theorem in Section~\ref{subsec:sperner}. For each $A\in Q'(G)$ we introduce an indicator variable $x_A\in\{0,1\}$, and force our family to be an antichain by adding a constraint $x_A+x_B\leq 1$ whenever $A\subsetneq B$:

\begin{equation*}
\begin{matrix}
\displaystyle \text{Maximize} & \sum_{A\in Q'(P_n)} x_A \\
\textrm{Subject to} & x_A + x_B & \leq & 1  & ~ \text{for }\forall A,B\in Q'(n) : A\subsetneq B 
\end{matrix}
\end{equation*}

One can then solve this IP with an LP solver and easily verify Conjecture~\ref{conj:falgas} for all $n\leq 24$ (and possibly larger $n$, depending on the computational power available). Falgas-Ravry conjectured~\cite{falgas} that the conclusion of Conjecture~\ref{conj:falgas} also holds if one replaces $P_n$ by any vertex-transitive graph. Observe that for any graph $G$ one can set up an IP in the exact same way as above and check whether this conjecture holds.

We can use these ideas to further generalize the questions in this section. Recall the generalization of Sperner's theorem due to Erd\H{o}s~\cite{erdossperner}, which states that in the Boolean lattice the largest $k$-chain free family is given by the $k-1$ largest layers. By using constraints of the form $x_A+x_B+x_C\leq 2$ for each $A\subsetneq B\subsetneq C$ and solving the resulting IP, we find that in $P_{12}$ the largest 3-chain-free family is given by the two largest layers. It would be interesting to know for which graphs the analogue of Erd\H{o}s's theorem holds.

~

\subsection{Example: some conjectures of Bollob\'as--Leader} 
For a family $\F\subset \binom{[n]}{r}$, denote its $d$-neighborhood by $N_d(\F)=\{B\subseteq [n]: |B\Delta F|=d \text{ for some }F\in\F\}$, where $B\Delta F=(B\setminus F) \cup (F\setminus B)$ denotes the symmetric difference.  Bollob\'as and Leader~\cite{bolllead} made several conjectures giving bounds on the maximum size of $N_d(\F)\cap \binom{[n]}{k}$, given fixed integers $d,k,r$ and the size of $\F$. Instead of focusing on only one of their conjectures, we describe how to phrase such problems as IPs in general.
\begin{problem}\label{prob:bolllead}
Given $d,k,r,m,n$. Amongst all families $\F\subset \binom{[n]}{r}$ of size $|\F|=m$, what is the smallest possible size of $N_d(\F)\cap \binom{[n]}{k}$?
\end{problem}

How does one phrase this as an IP? We certainly need to have indicator variables $x_A$ for each $A\in \binom{[n]}{r}$. We ensure that $|\F|=m$ by including the constraint $\sum x_A=m$. Our goal is to minimize the neighborhood. One way to go about this is to include indicator variables $y_B$ for each $B\in\binom{[n]}{k}$. Next, for each $B\in \binom{[n]}{k}$ and $A\in\binom{[n]}{r}$  such that $|A\Delta B|= d$  we include a constraint $x_A \leq y_B$, thus saying that if $A\in\F$ then we must have $B\in N_d(\F)$. The objective then is to minimize $\sum y_B$.

\begin{equation*}
\begin{matrix}
\displaystyle \text{Minimize} & \sum_{B\in \binom{[n]}{k}} y_B \\
\textrm{Subject to} & \sum_{A\in\binom{[n]}{r}}x_A  & = & m   \\
& y_B - x_A & \geq & 0  & ~ \text{for }\forall A\in\binom{[n]}{r}, B\in\binom{[n]}{k} : |A\Delta B|= d
\end{matrix}
\end{equation*}

What if our goal was to find the \emph{largest} possible size of $N_d(\F)\cap \binom{[n]}{k}$ in Problem~\ref{prob:bolllead}? Then we could replace the $y_B-x_A\geq 0$ constraint by $y_B - \sum x_A \leq 0$, where the sum goes over all $A\in \binom{[n]}{r}$ with $|A\Delta B|= d$. 

~

\subsection{Example: coloring graphs and posets} Given a graph $G$, is it 4-colorable? One very simple way to test this using linear programming is as follows: we introduce four families of indicator variables $x_v,y_v,z_v,w_v$ for all $v\in V(G)$ corresponding to the color classes. To ensure that every vertex gets at most one color we include the constraints $x_v+y_v+z_v+w_v\leq 1$ for all $v\in V(G)$. To force all color classes to be independent sets we add for each edge $uv\in E(G)$ the constraint $x_u+x_v\leq 1$, and similarly for the other color classes. The IP can then be written as follows:

\begin{equation*}
\begin{matrix}
\displaystyle \text{Maximize} & \sum_{v\in V(G)} x_v+y_v+z_v+w_v \\
\textrm{Subject to} & x_v+y_v+z_v+w_v  & \leq & 1  & ~ \text{for }\forall v\in V(G) \\
& x_u+x_v & \leq & 1  & ~ \text{for }\forall uv\in E(G)\\
&\ldots\\
& w_u+w_v & \leq & 1  & ~ \text{for }\forall uv\in E(G)
\end{matrix}
\end{equation*}

The graph~$G$ is 4-colorable precisely when the answer to this IP is $|V(G)|$. While there are much better algorithms to check whether a graph is $k$-colorable for fixed $k$, this same idea can be used for seemingly unrelated problems.

Say that a family $\F\subset 2^{[n]}$ is \emph{union-free} if there are no distinct $A,B,C\in\F$ satisfying $A\cup B = C$. In a series of papers, Kleitman~\cite{kleitmanunion} answered a question of Erd\H{o}s by proving that any union-free family has size at most $\binom{n}{\lfloor n/2 \rfloor} + 2^n/n$. Abbott and Hanson~\cite{abbott} raised the following problem: for any integer $n$ let $f(n)$ denote the minimum number of union-free families $\F_1,\F_2\ldots,\F_{f(n)}\subset 2^{[n]}$ such their union is the entire Boolean lattice. What is the value of $f(n)$?

Abbott and Hanson~\cite{abbott} gave an upper bound of $f(n)\leq \lfloor n/2\rfloor + 1$. Erd\H{o}s and Shelah~\cite{erdosshelah} proved that $n/4\leq f(n)$, which was improved first by Aigner--Grieser~\cite{aignergrieser} and then by Aigner--Duffus--Kleitman~\cite{aignerduffuskleitman} to the best current lower bound of 
$\frac{\ln 2}{2}n\leq f(n).$

This problem can be phrased as an IP in a similar way as how we calculated the chromatic number of a graph. The simplest  way is as follows. To check whether $f(n)\leq k$ we can introduce $k$  indicator variables $x_A^{(1)},x_A^{(2)},\ldots,x_A^{(k)}$ for each $A\subseteq [n]$ corresponding to the union-free families. Our goal is then to maximize the sum of all indicator variables, subject to the fact that the families are disjoint, and each family is union-free.

\begin{equation*}
\begin{matrix}
\displaystyle \text{Maximize} & \sum_{i=1}^k\sum_{A\subseteq [n]} x_A^{(i)} \\
\textrm{Subject to} & \sum_{i=1}^k x_A^{(i)}  & \leq & 1  & ~ \text{for }\forall A\subseteq [n] \\
& x_A^{(i)}+x_B^{(i)}+x_C^{(i)} & \leq & 2  & ~ \text{for }\forall i\in [k] \text{ and }A,B,C\subseteq [n]:\\&&&& A\cup B=C, ~ ~ A,B\neq C\\
\end{matrix}
\end{equation*}
The solution of this IP is $2^n$ precisely if $f(n)\leq k$. Solving this IP we can verify that $f(6)=4$, matching the upper bound. We will use this idea to disprove a conjecture related to rainbow matchings later in this paper.

~

\subsection{Example: partitioning a box into proper sub-boxes} Here we describe an application of the methods in the present paper by Bucic, Lidick\'{y}, Long, and the author~\cite{boxes}. A set of the form $\mathcal{A}=A_1\times A_2\times \ldots \times A_d$, where all $A_i$ are finite sets with $|A_i|\geq 2$ is called a \emph{$d$-dimensional discrete box}. A set of the form $\mathcal{B}=B_1\times B_2\times\ldots\times B_d$, where $B_i\subseteq A_i$ for all $i\in [d]$, is a \emph{sub-box} of $\mathcal{A}$. Such a sub-box $\mathcal{B}$ is said to be \emph{proper} if $B_i \neq A_i$ for every $i$. Alon, Bohman, Holzman and Kleitman~\cite{alonbohman} found a beautiful proof answering a question of Kearnes and Kiss, proving that a $d$-dimensional box cannot be partitioned into fewer than $2^d$ proper sub-boxes.

We can verify this for small $d$ and small $A_1,\ldots, A_d$ using an IP as follows. We introduce for each sub-box $\mathcal{B}$ an indicator variable $x_\mathcal{B}$, so that our goal will be to minimize the sum of all these indicator variables. The sub-boxes have to partition the box, hence for each $a\in A_1\times\ldots\times A_d$ we have a constraint $\sum x_\mathcal{B}=1$, where the sum goes over all sub-boxes $\mathcal{B}$ for which $a\in \mathcal{B}$. The sub-boxes we use have to be disjoint, which we can ensure by adding for every pair of intersecting sub-boxes $\mathcal{B},\mathcal{C}$ a constraint $x_\mathcal{B}+x_\mathcal{C}\leq 1$. 

\begin{equation*}
\begin{matrix}
\displaystyle \text{Minimize} & \sum_\mathcal{B} x_\mathcal{B} \\
\textrm{Subject to} & \sum_{\mathcal{B}} \mathbbm{1}_{a\in \mathcal{B}}\cdot x_\mathcal{B}  & = & 1  & ~ \text{for }\forall a\in \mathcal{A} \\
& x_\mathcal{B}+x_\mathcal{C} & \leq & 1  & ~ \text{for }\forall \mathcal{B},\mathcal{C}: \mathcal{B}\cap \mathcal{C}\neq \emptyset 
\end{matrix}
\end{equation*}

What happens if $|A_i|$ is odd for each $i$ and we want to partition only with sub-boxes $B_1\times \ldots \times B_d$ with $|B_i|$ odd for all $i$? There is a natural partition into $3^d$ such boxes, and Leader, Mili\'{c}evi\'{c} and Tan~\cite{leadermili} asked whether this is best possible.  By phrasing this question as an IP in essentially the same way as above and solving the resulting IP, in~\cite{boxes} we found that $[5]\times [5]\times [5]$ could be partitioned into 25 boxes. This then led us to proving that in general $2.93^d$ boxes are enough, answering the question of Leader, Mili\'{c}evi\'{c} and Tan.

~

\subsection{Example: Increasing the distance in the hypercube}

Let $Q_n$ be the hypercube, that is, the graph with vertex set $2^{[n]}$ and sets $A,B\subset [n]$ connected by an edge if $|A\Delta B|=1$. Two sets $A,B\in V(Q_n)$ are called \emph{antipodal} if $A=[n]\setminus B$. Yuzvinsky asked the following question:
\begin{question}[\label{ques:yuz}Yuzvinsky]
How many vertices must be
removed from the $n$-cube in order that no connected component of the remainder
contains an antipodal pair of vertices?
\end{question}
Kleitman answered Question~\ref{ques:yuz} in a strong way:
\begin{theorem}[\label{thm:kleitman1}Kleitman~\cite{yuzvinsky}]
Suppose removal of the vertices of the set $X$ leaves the n-cube with no connected component of size exceeding $2^{n-1}$. Then $X$ must contain at least $\binom{n}{\lfloor n/2\rfloor}$ vertices.
\end{theorem}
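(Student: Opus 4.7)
The plan is to reduce, via standard coordinate compressions on $X\subseteq 2^{[n]}$, to the case where $X$ is a down-set, and then apply Harper's vertex-isoperimetric inequality in the cube to obtain the bound. Throughout, assume for contradiction that $|X| < \binom{n}{\lfloor n/2\rfloor}$.

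First, for each $i \in [n]$ I would apply the compression $S_i$ that replaces every $A\ni i$ in $X$ with $A\setminus\{i\}$ whenever $A\setminus\{i\}\notin X$. This preserves $|X|$, and one checks that the maximum component size of $Q_n\setminus X$ does not increase under $S_i$. Iterating all compressions until stable, we may assume that $X$ is a down-set in $2^{[n]}$.

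Next, group the connected components of $Q_n\setminus X$ into two families whose unions $A,B\subseteq V(Q_n)$ both satisfy $|A|,|B|\leq 2^{n-1}$; since every component has size at most $2^{n-1}$ and the total is $2^n-|X|$, a greedy assignment (start with the largest component and keep adding remaining components to the currently smaller side) succeeds. No $Q_n$-edge crosses from $A$ to $B$, so the outer vertex boundary of $A$ lies in $X$, giving $|X|\geq |N(A)\setminus A|$. Since $|A|+|B|=2^n-|X|$ with both summands at most $2^{n-1}$, we get $|A|\geq 2^{n-1}-|X| > 2^{n-1}-\binom{n}{\lfloor n/2\rfloor}$, which is exactly the size of the Hamming ball of radius $\lfloor n/2\rfloor -1$. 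By Harper's vertex-isoperimetric theorem, $|N(A)\setminus A|\geq\binom{n}{\lfloor n/2\rfloor}$, contradicting the choice of $X$.

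The main obstacle is the compression step: a priori, ``collapsing'' a coordinate can fuse two formerly disconnected components of $Q_n\setminus X$ into a single one exceeding $2^{n-1}$. Showing that this cannot happen requires a careful local argument about how the two coordinate slices of $X$ along direction $i$ interact. A secondary subtlety is the greedy bi-partitioning of components, which must be argued more carefully when several components have size close to $2^{n-2}$, possibly by exploiting the down-set structure produced by step 1.
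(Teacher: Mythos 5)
This theorem is quoted in the paper as a known result of Kleitman (it is cited, not proved there), so there is no in-paper argument to compare against; I can only assess your proposal on its own merits, and it has genuine gaps.

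The central one is the ``greedy bi-partition'' of the components into two unions $A,B$ with $|A|,|B|\leq 2^{n-1}$. The abstract claim you are relying on --- that pieces each of size at most half the total can always be split into two groups each of size at most half the total --- is simply false: three components of sizes roughly $0.4\cdot 2^n$, $0.4\cdot 2^n$, $0.19\cdot 2^n$ admit no such split, since any two of them together exceed $2^{n-1}$. Ruling out such component structures is not a ``secondary subtlety'': a set of size $0.4\cdot 2^n$ sits well below Harper's middle window $\bigl[\,|B(\lfloor n/2\rfloor-1)|,\;2^n-|B(\lfloor n/2\rfloor-1)|\,\bigr]$, so Harper only guarantees its boundary is a constant fraction of $\binom{n}{\lfloor n/2\rfloor}$, and one cannot immediately exclude an $X$ with $|X|<\binom{n}{\lfloor n/2\rfloor}$ separating the cube into several such pieces. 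This is exactly where the real content of Kleitman's theorem lies, and your plan has no mechanism for it. Relatedly, the compression step is both unjustified (as you note, compressions can merge components and blow a component past $2^{n-1}$; I see no easy local fix) and unnecessary, since Harper's inequality applies to arbitrary vertex sets --- so step 1 buys you nothing and costs you a lemma you cannot prove.

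There is also an arithmetic error in the Harper step for even $n$: the ball of radius $n/2-1$ has size $2^{n-1}-\tfrac12\binom{n}{n/2}$, not $2^{n-1}-\binom{n}{n/2}$, so the bound $|A|>2^{n-1}-\binom{n}{n/2}$ does not place $A$ at or above that ball, and the Harper minimizer of that smaller size has vertex boundary strictly below $\binom{n}{n/2}$. This particular issue is repairable --- take $A$ to be the \emph{larger} of the two sides, so that $|A|\geq(2^n-|X|)/2>2^{n-1}-\tfrac12\binom{n}{\lfloor n/2\rfloor}$, after which Harper together with the local LYM inequality for the upper shadow does yield boundary at least $\binom{n}{\lfloor n/2\rfloor}$ in both parities --- but only once the partition into $A$ and $B$ has been legitimately established, which is the step that is actually missing.
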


Note that Theorem~\ref{thm:kleitman1} implies that the answer to Question~\ref{ques:yuz} is $\binom{n}{n/2}$ -- indeed, if removing $X$ has no connected component that contains an antipodal pair of vertices, then in particular no connected component has size exceeding $2^{n-1}$. Frankl asked the following follow-up question:
\begin{question}[Frankl~\cite{yuzvinsky}]
How few vertices can one remove from
the $n$-cube so that the distance between antipodal pairs in the $n$-cube becomes at
least $n + 2$ using only remaining edges? 
\end{question}

(Note that the distance cannot be $n+1$ because of parity.) Certainly the answer is not greater than $\binom{n}{\lfloor n/2 \rfloor}$. We can phrase this as an IP as follows. Observe that the condition that removing a set $S$ of vertices increases the distance between any two antipodal pairs is equivalent to saying that $S$ intersects every geodesic on $n+1$ vertices. Here a geodesic on $n+1$ vertices given by a starting vertex $A_0\in \{0,1\}^n$ and a permutation $\pi\in S_n$ is the sequence of vertices $A_0,A_1,\ldots,A_{n}$ where $A_{i}$ is obtained from $A_{i-1}$ by flipping the $\pi(i)$-th digit. Using indicator variables representing the elements of $S$, we get he following IP:

\begin{equation*}
\begin{matrix}
\displaystyle \text{Minimize} & \sum_{A\subseteq [n]} x_A \\
\textrm{Subject to} & \sum_{A\in \mathcal{G}} x_A  & \geq & 1  & ~ \text{for  every geodesic }\mathcal{G} \\
\end{matrix}
\end{equation*}

Solving this IP we find that the answer is $\binom{n}{\lfloor n/2 \rfloor}$ for $n\leq 7$, and it is plausible that removing the middle layer is the best one can do for all $n$.

~

The above examples illustrate how various problems in extremal combinatorics can be phrased as IPs. Typically this is not the best way to approach any one of these problems. The main advantage of this method is that it provides a very quick way to perform a sanity check on a wide variety of conjectures.  In practice it only takes a few minutes to write a program to generate the IP, and only a few seconds to solve it with the LP solver for small values of the parameters and verify that there are no small counterexamples.

Let us now turn to the main part of the paper, where we use these methods to resolve some open conjectures and questions in combinatorics.

\section{Main results}~\label{sec:results}

\subsection{Antichains of fixed diameter}\label{subsec:anti}

Define the \emph{diameter} $\mathrm{diam}(\F)$ of a family $\F\subset 2^{[n]}$ as $\mathrm{diam}(\F)=\max_{A,B\in\F} \{|(A\setminus B)\cup (B\setminus A)|\}$. Frankl~\cite{frankldiam} considered the problem of determining the largest size of an antichain in $2^{[n]}$ of diameter at most $d$. 

We phrase this problem as an integer program as follows. We introduce for each set $A\subset [n]$ a 0-1 valued variable $x_A$ that indicates whether $A\in\F$. We can force $\F$ to be an antichain by adding, for each comparable pair $A\subsetneq B$ a linear constraint $x_A+x_B\leq 1$. Next, to ensure that the solution has diameter at most $d$, for each pair $A,B$ with $|(A\setminus B)\cup (B\setminus A)|>d$ we add a restriction $x_A+x_B\leq 1$. Frankl~\cite{frankldiam} made the following conjecture:

\begin{conjecture}[\label{conj:frankldiam1}\cite{frankldiam}]Let $n, d$ be positive integers, $n > d$. Suppose that $\F \subset 2^{[n]}$ is an antichain with diameter $\mathrm{diam}(\F) \leq d$. Then
$$|\F|\leq \binom{n}{\lfloor d/2\rfloor}.$$
\end{conjecture}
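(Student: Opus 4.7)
The plan is to treat Conjecture 4.1 in exactly the same spirit as the earlier examples in Section~\ref{sec:linbas}: write it as an integer program and let an LP solver decide it for us on small cases. For each pair $(n,d)$ I introduce an indicator variable $x_A \in \{0,1\}$ for every $A \subseteq [n]$, add antichain constraints $x_A + x_B \leq 1$ whenever $A \subsetneq B$, add diameter constraints $x_A + x_B \leq 1$ whenever $|A \Delta B| > d$, and maximize $\sum_A x_A$. Comparing the optimum against $\binom{n}{\lfloor d/2 \rfloor}$ decides the conjecture for that $(n,d)$; any strict excess yields an explicit counterexample, while a matching value provides numerical evidence in its favour.

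Before launching experiments I would think about where the conjecture is most vulnerable. When $d = 2k$ is even, the family $\binom{[n]}{k}$ is itself an antichain of diameter exactly $d$ that already saturates the bound, so the even case looks genuinely tight. The odd case $d = 2k+1$ is suspicious: same-sized antichains always have even diameter, so mixing $k$-sets with $(k+1)$-sets costs nothing in the diameter constraint as long as each cross-pair has symmetric difference at most $d$, while the antichain condition only forbids the containments $A \subsetneq B$. This extra degree of freedom is not used by the conjectured extremal family $\binom{[n]}{k}$, so I would first test small odd $d$ with $n$ moderately larger than $d$, for instance $(d,n) \in \{(3,7),(3,8),(5,9),(5,10),(7,11)\}$, and look for an excess over $\binom{n}{k}$.

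The main obstacle is computational: the naive IP has $2^n$ variables and up to $\binom{2^n}{2}$ constraints, which already strains Gurobi around $n = 15$. To push further I would use structural reductions. The key observation is that in any antichain $\F$ of diameter at most $d$, if $|A| < |B|$ for some $A,B \in \F$ then the antichain condition forces $|A \cap B| \leq |A| - 1$, so $|A \Delta B| \geq |B| - |A| + 2$, giving $|B| - |A| \leq d - 2$; hence all sizes appearing in $\F$ lie in an interval of length at most $d - 2$. Enumerating over the starting point of this interval reduces the problem to a family of smaller IPs, each with at most $\sum_{k \in W}\binom{n}{k}$ variables for a window $W$ of $d-1$ consecutive sizes. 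One can also break symmetry in Gurobi via shifting-type constraints, which typically yields another large speed-up on these sparse but highly symmetric models.

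If the solver returns a value exceeding $\binom{n}{\lfloor d/2 \rfloor}$ at some small $(n,d)$, the hardest intellectual step is turning the numerical witness into a clean combinatorial description that generalises to infinitely many $n$. I would read off the extremal family, check by hand that it is an antichain with the claimed diameter, rerun the experiment for several consecutive values of $n$ with $d$ fixed, and compare the extremal families looking for a uniform description -- for example a family built by fixing a small ``kernel'' subset of $[n]$ and then taking all sets of one or two prescribed sizes that meet this kernel in a prescribed way. This last step is what the LP cannot do for us, and is where the real combinatorial content of a refutation would sit.
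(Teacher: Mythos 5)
Your IP formulation is exactly the one the paper uses for this conjecture (an indicator variable $x_A\in\{0,1\}$ for each $A\subseteq[n]$, antichain constraints $x_A+x_B\le 1$ for $A\subsetneq B$, diameter constraints $x_A+x_B\le 1$ for pairs with $|A\Delta B|>d$, maximize $\sum_A x_A$), and your size-window reduction and your focus on odd $d$ are sensible refinements the paper does not spell out. Be aware, though, that the paper reports this particular search comes up empty -- the conjecture is confirmed for $(n,d)=(10,3),(8,5),(8,7)$ -- and the actual counterexamples it finds are to the companion conjecture for $(\ell+1)$-chain-free families of bounded diameter, precisely in the odd-$d$ regime you flag as suspicious.
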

Frankl proved~\cite{frankldiam} Conjecture~\ref{conj:frankldiam1} for $n \geq 6(r+1)^2$. Using an LP solver we attempted to find a counterexample to this conjecture, but according to the computer it holds for the values $(n,d)=(10,3), (8,5), (8,7)$.

Frankl also made a similar conjecture for $k$-chain free families. A $k$-chain is a collection of $k$ sets $A_1\subsetneq A_2\subsetneq \ldots \subsetneq A_k$ totally ordered under inclusion. Observe that, similarly to being an antichain, the property of being $k$-chain-free can be captured by an integer program by adding for each $k$-chain $A\subsetneq B\subsetneq\ldots C$ a linear constraint $x_A+x_B+\ldots +x_C\leq k-1$.
\begin{conjecture}[\label{conj:frankldiam2}\cite{frankldiam}]
Let $n, d,\ell$ be positive integers, $n > d\geq \ell$. Suppose that $\F \subset 2^{[n]}$ is $\ell + 1$-chain-free with diameter $\mathrm{diam}(\F) \leq d$. Then setting $s=\min\{\ell - 1,\lfloor d/2 \rfloor\}$ one has
$$|\F|\leq \sum_{\lfloor d/2 \rfloor \geq i \geq \lfloor d/2 \rfloor - s}\binom{n}{i}.$$
\end{conjecture}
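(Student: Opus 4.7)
The plan is to translate Conjecture~\ref{conj:frankldiam2} into an integer program in the spirit of Section~\ref{sec:linbas} and to test small values of the parameters $(n,d,\ell)$, looking for a triple where the IP optimum exceeds the conjectured bound $\sum_{\lfloor d/2\rfloor - s\le i\le \lfloor d/2\rfloor}\binom{n}{i}$. First I would introduce a 0-1 variable $x_A$ for every $A\subseteq[n]$, with objective $\sum_A x_A$. The diameter condition is enforced exactly as in Conjecture~\ref{conj:frankldiam1}: for each pair $A,B$ with $|A\triangle B|>d$ I add $x_A+x_B\le 1$. The $(\ell+1)$-chain-free condition is enforced by the constraint $x_{A_1}+x_{A_2}+\cdots+x_{A_{\ell+1}}\le \ell$ for every chain $A_1\subsetneq A_2\subsetneq\cdots\subsetneq A_{\ell+1}$, exactly as the $k$-chain-free Sperner-type generalization described before the conjecture.

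Before running anything, I would try to shrink the IP using two easy reductions. Firstly, because diameter is preserved by complementation and permutations of $[n]$, I can fix a base set (say, require $\emptyset\in\F$ or add lexicographic symmetry-breaking cuts) to reduce the search space. Secondly, because $\mathrm{diam}(\F)\le d$ forces $\F$ to be contained in a ``ball of radius $d$'' in the symmetric-difference metric, after fixing one element $A_0\in\F$ I only need variables for sets $A$ with $|A\triangle A_0|\le d$, which for small $d$ is far smaller than $2^n$. This also dramatically cuts the number of candidate chains: I only need $(\ell+1)$-chains entirely contained in this ball, and in practice the useful chains have middle elements around $|A_0|+\lfloor d/2\rfloor$, so I can trim further by restricting to sets whose sizes lie in the relevant window.

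Next I would sweep $(n,d,\ell)$ in order of increasing IP size, starting with $\ell=2$ (the antichain case already partially tested by the author) and then $\ell=3,4,\ldots$ with moderate $n$ and $d$. Experience from the preceding antichain example suggests the tightest test cases arise when the forbidden $s=\min\{\ell-1,\lfloor d/2\rfloor\}$ is close to its cap, so I would prioritize triples like $(n,d,\ell)=(8,5,3),(10,5,3),(9,6,3),(8,6,4),(10,7,4)$, etc. For each triple I would compare the LP/IP optimum with $\sum_{\lfloor d/2\rfloor-s\le i\le \lfloor d/2\rfloor}\binom{n}{i}$. If the IP returns a larger value, Gurobi also supplies an optimal feasible $\F$, from which I would read off the counterexample, check it by hand, and then try to understand its structure (sizes of members, any product/blow-up structure) in order to lift it to an infinite family violating the conjecture for all sufficiently large $n$.

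The main obstacle I anticipate is computational: the chain constraints are over $(\ell+1)$-tuples, and listing all such chains explicitly is infeasible once $n$ and $\ell$ grow. Two ways around this stand out. One is to keep only inclusion-minimal chain constraints, i.e.\ chains of covering relations in the induced poset on the ball around $A_0$; any longer chain contains such a refinement, so the constraints are equivalent. The other, cleaner, route is lazy constraint generation: solve the LP relaxation with only the pairwise diameter and antichain constraints, then at each integer-feasible solution run a short subroutine that searches for an $(\ell+1)$-chain in the support of $x$ and, if one is found, adds the corresponding cut and re-optimizes. This callback-based approach is standard in Gurobi and should keep the active constraint set manageable. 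Once the computer finds (or fails to find) a counterexample in the accessible range, the remaining work is the usual extremal-combinatorics step of guessing the right construction from the data and verifying it analytically.
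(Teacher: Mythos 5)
Your overall plan is exactly what the paper does: encode the diameter condition by pairwise constraints, the $(\ell+1)$-chain-free condition by $x_{A_1}+\cdots+x_{A_{\ell+1}}\le\ell$, sweep small parameters, and then generalize the optimal family returned by the solver. The paper finds its counterexample already at $(n,d,\ell)=(6,5,2)$ (bound $21$, actual family of size $26$, generalizing to the full $\lfloor d/2\rfloor$-layer plus a star on the layer above, of size $(n-1)^2$ for $d=5$), which your sweep over increasing IP size starting at $\ell=2$ would reach.

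Two of your proposed reductions, however, are not valid as stated. First, you cannot ``fix a base set (say, require $\emptyset\in\F$)'': the symmetries available are permutations of $[n]$ and complementation, which preserve both the diameter and the poset structure, but XOR-translation by a fixed set --- which is what you would need to move an arbitrary member of $\F$ to $\emptyset$ --- preserves Hamming distance while destroying the containment order, so it does not preserve the chain-free condition. (Forcing $\emptyset\in\F$ with $\ell=1$ would collapse $\F$ to a single set.) What you \emph{can} do is branch over $|A_0|$ and use $S_n$-symmetry to take $A_0=[|A_0|]$, and then restrict to the radius-$d$ ball around $A_0$; that part is fine. Second, the claim that keeping only chains of covering relations gives an equivalent constraint set is false: a non-saturated chain does not contain a saturated $(\ell+1)$-chain among its \emph{own} members, and refining it introduces intermediate sets that need not be selected by $x$. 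For $\ell=1$ the family $\{\{1\},\{1,2,3\}\}$ satisfies every covering-pair constraint in $2^{[3]}$ yet is not an antichain. Using only these constraints yields a relaxation whose optimum can exceed the true extremal value, so a ``counterexample'' it reports may be spurious. Your lazy-constraint-generation alternative is the correct fix, and your hand-verification step would catch the problem, but the equivalence claim itself should be dropped.
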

Frankl noted that the special case $s=\lfloor d/2 \rfloor$ follows directly from Kleitman's diameter theorem~\cite{kleitmandiam}. Frankl also wrote~\cite{frankldiam} that it follows from the methods of his paper that Conjecture~\ref{conj:frankldiam2} holds for $n$ large enough. This claim is incorrect, as shown below.

By solving the IP directly, we obtain a counterexample for $n=6, d=5, \ell = 2$. For these parameters the bound given in Conjecture~\ref{conj:frankldiam2} is $\binom{n}{2}+\binom{n}{1} = 21$, but in fact a family of size 26 exists: $$\F=\binom{[6]}{2} \cup
\left\{A\in \binom{[6]}{3}: 1\in A\right\}\cup\{23456\}.$$ 
Given this example given by the computer, it is easy to realize that the family given by the entire 2-layer together with a star on the third layer has diameter 5 and is 3-chain-free for any $n$. (Here a star means all sets containing a fixed element.) It has size $\binom{n}{2}+\binom{n-1}{2}=(n-1)^2$ which is bigger than the construction implied by Conjecture~\ref{conj:frankldiam2} for all $n\geq 6$, but as the example shows it is not best possible for $n=6$. Surprisingly, according to the LP solver this bound is tight for $n=7,8$. It is plausible that this is tight for all $n\geq 7$. 

For $n=8, d=7, \ell = 2$ the LP solver gives that the best solution is to take stars centered at $\{1\}$ in layers 2 and 4, together with all sets avoiding $\{1\}$ on layers 3 and 5, giving a family of size $\binom{7}{1}+\binom{7}{3} + \binom{7}{3} + \binom{7}{5}=98$ beating the value 84 given by Conjecture~\ref{conj:frankldiam2}. For $n=9, d=7, \ell=2$ a construction of size 141 is given by taking the full third layer, a star centered on $\{1\}$ on the fourth layer and the single set $\{2,3,4,\ldots,9\}$. It is beyond our computational limits to see if this is best possible for $n=9, d=7,\ell=2$. 

In general for $d$ odd one can take the entire $\lfloor d/2\rfloor$ layer together with a star on the layer above to get a construction of size $\binom{n}{\lfloor d/2 \rfloor} + \binom{n-1}{\lfloor d/2 \rfloor}$ and it is plausible that for $n\geq n_0(d)$ this is the best one can do to avoid 3-chains. 

~

\subsection{Diversity of set systems}\label{subsec:int}

Let $n$ and $k$ be positive integers with $n>2k$, and let $\F\subset\binom{[n]}{k}$ be an intersecting family. Denote the maximum degree of $\F$ by $\Delta(\F)$, so that $\Delta(\F) = \max_i |\F(i)|$ where $\F(i)=\{F\in\F:i\in F\}$. We define the \emph{diversity} $\rho(\F)$ of $\F$ by $\rho(\F)=|\F|-\Delta(\F)$. Observe that if $\F$ is a star, i.e.~all sets in $\F$ contain a fixed element, then $\rho(\F)=0$ and otherwise we have $\rho(\F)\geq 1$. Lemons and Palmer~\cite{lempal} proved that for $n>6k^3$ we have $\rho(\F)\leq \binom{n-3}{k-2}$. The following conjecture was made by Frankl~(\cite{frankltoku}, p.~214):

\begin{conjecture}[\label{conj:frankldiv}\cite{frankltoku}]
Suppose that $n>3(k-1)$ and $\F\subset \binom{[n]}{k}$ is intersecting. Then
$$\rho(\F)\leq \binom{n-3}{k-2}.$$
\end{conjecture}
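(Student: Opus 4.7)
The plan is to combine the shifting technique with a Hilton--Milner dichotomy applied to the off-star portion of $\F$, and then use the maximum-degree hypothesis to beat the trivial star bound. First, I would left-shift $\F$ towards the element of maximum degree, say element $1$, by applying the operations $S_{1j}$ for all $j \geq 2$; standard arguments show this preserves the intersecting property, does not change $|\F|$, and does not decrease $|\F(1)| = \Delta(\F)$, so $\rho(\F)$ is unaffected. Writing $\mathcal{G} = \{F \in \F : 1 \notin F\}$, the problem reduces to showing $|\mathcal{G}| \leq \binom{n-3}{k-2}$. Note that $\mathcal{G}$ is itself an intersecting family on the ground set $[2,n]$, and every $G \in \mathcal{G}$ must additionally intersect every $F \in \F(1)$.

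Next, I would apply a Hilton--Milner dichotomy to $\mathcal{G}$ as an intersecting family on $[2,n]$. Either some element $j \in [2,n]$ lies in every $G \in \mathcal{G}$, or $|\mathcal{G}| \leq \binom{n-2}{k-1} - \binom{n-k-1}{k-1} + 1$; in the latter case, a direct calculation using $n > 3(k-1)$ should verify that the Hilton--Milner bound already lies below $\binom{n-3}{k-2}$, closing the non-star case. In the star case, shiftedness forces the centre to be $j=2$, so every $G \in \mathcal{G}$ contains $2$. To refine the trivial bound $\binom{n-2}{k-1}$ down to $\binom{n-3}{k-2}$, I would split $\mathcal{G} = \mathcal{G}_3 \cup \mathcal{G}_3^c$ according to whether $3 \in G$, and attempt to show that $\mathcal{G}_3^c$ is empty. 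The idea is that if $G \in \mathcal{G}$ contains $2$ but not $3$, then many sets in $\F(1)$ (in particular those avoiding $2$) are forced to pass through $G$, and the inequality $|\F(1)| \geq |\F(2)|$ coming from the maximum-degree assumption, combined with shiftedness, should yield a contradiction. This reduces the problem to the case that every $G \in \mathcal{G}$ contains the pair $\{2,3\}$, which gives $|\mathcal{G}| \leq \binom{n-3}{k-2}$ immediately.

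The main obstacle I anticipate is precisely the last step in the narrow regime $n > 3(k-1)$. The cross-intersecting inequalities needed to rule out $\mathcal{G}_3^c \neq \varnothing$ are classical (via Frankl's bounds or the Kruskal--Katona-style estimates used by Lemons and Palmer for $n > 6k^3$), but they are typically loose near the threshold $n \approx 3k$, and pushing them all the way down to $n > 3(k-1)$ would require either a careful induction on $k$ with a strengthened inductive hypothesis, or a bespoke shifting argument that tracks how $\F(1) \setminus \F(2)$ and $\mathcal{G}_3^c$ interact on the residual ground set $[4,n]$. If the induction breaks down exactly at the boundary, this would be a concrete signal to check the conjecture computationally via the IP described in Section~\ref{sec:linbas} at small values $n = 3(k-1)+1, 3(k-1)+2$, since the obstruction may well reflect the truth rather than a defect of the argument.
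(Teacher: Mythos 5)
The statement you are trying to prove is false, and the paper's treatment of it is a refutation, not a proof: solving the associated integer program produces the counterexample $n=7$, $k=3$, $\F=\{235,236,246,345,456,124,125,134,136,156\}$, which is intersecting (one triple from each complementary pair in $\binom{[6]}{3}$), has every non-isolated vertex of degree $5$, and hence $\rho(\F)=10-5=5>\binom{4}{1}=4$. Note that this lives exactly at the boundary $n=3(k-1)+1$ that you flagged as suspicious; your closing instinct to test the conjecture computationally at $n=3(k-1)+1$ was the right one, and it is precisely what the paper does. The conjecture is only known to hold for $n\geq 6k^2$ (Frankl) and $n>Ck$ (Kupavskii), so no argument can close the full range $n>3(k-1)$.

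The concrete false step in your outline is the claim that in the non-star case of the Hilton--Milner dichotomy, ``a direct calculation using $n>3(k-1)$'' shows the Hilton--Milner bound lies below $\binom{n-3}{k-2}$. It never does: for an intersecting $k$-uniform family on the $(n-1)$-element ground set $[2,n]$, the Hilton--Milner bound is $1+\binom{n-2}{k-1}-\binom{n-k-2}{k-1}$, which counts (up to the $+1$) the $(k-1)$-subsets of an $(n-2)$-set meeting a fixed $k$-set and therefore always exceeds $\binom{n-3}{k-2}$ --- for large $n$ it is larger by roughly a factor of $k$. In the counterexample above the off-star part $\mathcal{G}=\{235,236,246,345,456\}$ is exactly such a non-star intersecting family, of size $5>4$, so the non-star branch of your argument cannot be closed. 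The subsequent reduction to families containing $\{2,3\}$ is therefore moot, and the later steps (ruling out $\mathcal{G}_3^c$ via cross-intersecting inequalities) would also have to fail somewhere, since the target inequality is simply not true near $n\approx 3k$.
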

The example $\F=\{F\in \binom{[n]}{k}: |F\cap [5]|\geq 3 \}$ shows that this conjecture fails for $n\leq 3(k-1)$.  This conjecture was proved by Frankl~\cite{frankldiam} for $n\geq 6k^2$ and recently by Kupavskii~\cite{kupadiv} for $n>Ck$ for some large constant $C$. 

One can phrase this problem as an IP as follows. Fix some $n$ and $k$ and introduce an indicator variable $x_A$ for each $A\in \binom{[n]}{k}$ as usual. To ensure that $\F$ is intersecting, for any distinct pair of sets $A,B$ add a constraint $x_A+x_B\leq 1$. Next we ensure that the diversity of $\F$ is attained at the element 1. That is, for each $i\neq 1$ we add a constraint
$$\sum_{A\subset [n], 1\not\in A} x_A - \sum_{A\subset [n], i\not\in A} x_A \leq 0.$$

Then in order to find a family $\F$ in $\binom{[n]}{k}$ with the largest possible diversity, we need to maximize $\sum_{A\subset [n], 1\not\in A} x_A$ subject to these constraints. Solving this IP yields a very small counterexample with $n=7, k=3$: $$\F=\{235,236,246,345,456,124,125,134,136,156\}.$$ Then $\F$ is intersecting, as all sets are 3-subsets of $[6]$ and we picked exactly one from each complementary pair. Every vertex (except for the isolated one) has degree exactly $5$ and hence $\rho(\F) = |\F|-5=5$, but $\binom{n-3}{k-2} = \binom{4}{1}=4$. We note that this also disproves a stronger conjecture of Frankl on the same page of~\cite{frankltoku}.

While writing this paper, it was brought to our attention by Andrey Kupavskii that Huang~\cite{huang} had independently, but strictly before us, disproved Conjecture~\ref{conj:frankldiv}. In his paper he makes two new conjectures on the maximum diversity of families, and we shall disprove both.

\begin{conjecture}[\cite{huang}\label{conj:huang1}]
For $n=2k+1$, suppose $\F\subset 2^{[n]}$ is intersecting. Then
$$\rho(\F)\leq \rho(\mathcal{Q}_k)=\sum_{i=k+1}^{2k}\binom{2k}{i}.$$
\end{conjecture}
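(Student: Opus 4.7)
The plan is to set up an integer program mirroring the one used in Subsection~\ref{subsec:int} to disprove Conjecture~\ref{conj:frankldiv}, but now over the entire Boolean lattice $2^{[n]}$ rather than a single uniform layer. Fix $n=2k+1$ and introduce a $0$-$1$ variable $x_A$ for each $A\subseteq [n]$. Enforce intersectingness by adding $x_A+x_B\le 1$ for every disjoint pair $A,B$, and break the vertex-permutation symmetry by declaring vertex $1$ to attain the maximum degree:
\[
\sum_{A\not\ni 1} x_A \;-\; \sum_{A\not\ni i} x_A \;\le\; 0 \qquad\text{for each }2\le i\le n.
\]
Under these constraints the objective $\sum_{A\not\ni 1} x_A$ equals $\rho(\F)$, and one maximizes it and compares the optimum with the conjectured upper bound $\sum_{i=k+1}^{2k}\binom{2k}{i}$.

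For small $k$ the resulting IP is well within reach of a standard LP solver: at $k=3$ there are only $2^7=128$ variables and on the order of $3^7$ disjointness constraints, which should be solved in seconds. The plan is to run the IP for $k=2,3,\ldots$ in turn and report the first value of $k$ at which the optimum strictly exceeds the conjectured bound. Since Conjecture~\ref{conj:frankldiv} already fails at the parameter $(n,k)=(7,3)$, and Huang's conjecture is a natural non-uniform analogue on the same parameters, it is reasonable to expect a counterexample to surface already at very small $n$; in fact the up-closure of the $3$-uniform counterexample given for Conjecture~\ref{conj:frankldiv} is a promising starting candidate to feed back to the solver as a warm-start.

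The main obstacle is the rapid blow-up of the encoding: the number of disjointness constraints grows like $3^n$, so for $n\gtrsim 13$ the instance becomes heavy for a commodity solver. A helpful reduction is to work modulo up-closure. If $\F$ is intersecting then so is its up-set closure, and taking up-closure cannot decrease $\rho$: adding a single set raises $|\F|$ by $1$ and $\Delta(\F)$ by at most $1$. Hence one may assume $\F$ is an up-set and encode it by the antichain of its minimal elements, which reduces both the variable count and the number of active disjointness constraints. Even without this refinement, the raw IP is small enough at $k=2,3$ that direct computation should already settle the question.
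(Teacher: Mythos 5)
Your IP formulation is exactly the one the paper uses to refute this conjecture: the same disjointness constraints, the same symmetry-breaking constraints forcing vertex $1$ to attain the maximum degree, and the same objective $\sum_{A\not\ni 1}x_A=\rho(\F)$; executing it at $n=7$ does yield a counterexample of diversity $23$ against the conjectured bound of $22$ (the up-closure of the Fano plane), with further counterexamples at $n=9$. Your up-closure reduction is sound and consistent with the paper's observation that all counterexamples found are up-closed, so this is essentially the same approach.
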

Here $\mathcal{Q}_k=\{A: A\subset [2k+1], |A|\geq k+1\}$. The second conjecture concerns the case $n=2k$.
\begin{conjecture}[\cite{huang}\label{conj:huang2}]
For $n=2k$, suppose $\F\subset 2^{[n]}$ is intersecting. If $k$ is not a power of 2, then
$$\rho(\F)\leq \frac12 \binom{2k-1}{k-1} + \sum_{i=k+1}^{2k-1}\binom{2k-1}{i};$$
and if $k$ is a power of 2, then
$$\rho(\F)\leq \frac12 \left(\binom{2k-1}{k-1}-1\right) + \sum_{i=k+1}^{2k-1}\binom{2k-1}{i}.$$
\end{conjecture}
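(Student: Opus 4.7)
The plan is to mirror the integer program used to disprove Conjecture~\ref{conj:frankldiv}, adapted from the $k$-uniform setting to the full Boolean lattice. Fix $n=2k$ and introduce an indicator variable $x_A\in\{0,1\}$ for each $A\subseteq [n]$. We impose the intersecting property by adding the constraint $x_A+x_B\leq 1$ for every disjoint pair $A\cap B=\emptyset$. By relabeling we may assume that the maximum degree is attained at the element~$1$, which we encode by
$$\sum_{A\ni i} x_A \;-\; \sum_{A\ni 1} x_A \;\leq\; 0 \qquad \text{for each } i\in\{2,\ldots,n\}.$$
The diversity then equals $\rho(\F)=\sum_{A:\,1\notin A} x_A$, which we set as the objective to maximize.

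First I would solve this IP for $k=3$ (so $n=6$) and compare the optimum against the conjectured bound $\tfrac{1}{2}\binom{5}{2}+\binom{5}{4}+\binom{5}{5}=11$. If no counterexample surfaces at $k=3$, I would move to $k=4$ (a power of $2$, so the adjusted bound $\tfrac{1}{2}(\binom{7}{3}-1)+\binom{7}{5}+\binom{7}{6}+\binom{7}{7}=46$ applies), and then $k=5$. Any integer-feasible solution whose objective exceeds the conjectured value is itself the counterexample. The subsequent step would be to inspect the family returned by Gurobi, recognize its combinatorial pattern, and try to extend it to an infinite family refuting the conjecture for all sufficiently large~$k$ in the relevant parity/power-of-two class.

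The main obstacle is computational scalability. For $n=2k$ the IP has $2^n$ variables and on the order of $3^n/2$ disjoint-pair constraints, so $k=3$ is trivial and $k=4$ should be routine, but $k=5$ already gives $1024$ variables and tens of thousands of constraints and may push against Gurobi's practical limits on a laptop. If that happens, the natural speed-up is to break the symmetry of the stabilizer of $1$ in the symmetric group $S_n$ by imposing a canonical ordering on the $x_A$ within each orbit, which typically shrinks the effective search tree by an order of magnitude. A secondary risk is that the true counterexample involves a construction whose specific algebraic structure (for example, invariance under some cyclic group acting on $[n]\setminus\{1\}$) is crucial; in that case I would encode this invariance directly into the IP, reducing the number of free variables to the number of orbits and making much larger~$k$ tractable at the cost of restricting the search space.
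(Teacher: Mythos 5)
Your IP formulation is the same as the paper's (your max-degree constraint $\sum_{A\ni i}x_A\le\sum_{A\ni 1}x_A$ is equivalent to the paper's $\sum_{1\notin A}x_A\le\sum_{i\notin A}x_A$), and your plan of marching through $k=3,4,5$ with the correct conjectured bounds ($11$, $46$, $193$) matches what the paper does: the counterexample appears at $n=10$, where Gurobi finds an up-closed intersecting family of diversity $197>193$ in under a minute. This is essentially identical to the paper's approach; your worries about scalability at $k=5$ turn out to be unfounded, so the symmetry-breaking refinements are not needed.
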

Huang checked~\cite{huang} Conjectures~\ref{conj:huang1} and~\ref{conj:huang2} for $n\leq 6$ using a computer. Using the methods of the present paper however, it is straightforward to check these conjectures for larger values of $n$.

The set-up for the integer programs is essentially the same as the set-up for Conjecture~\ref{conj:frankldiv}. For $n=7$ and $n=9$ it takes seconds of run-time to find counterexamples to Conjecture~\ref{conj:huang1}, and for $n=10$ it took less than a minute of run-time to find a counterexample to Conjecture~\ref{conj:huang2}. Perhaps unsurprisingly all counterexample families $\F$ we found are up-closed, meaning that if $A\in \F$ and $A\subset B$ then $B\in\F$.
For $n=7$, a construction with diversity of 23 which beats the value of 22 in the conjecture is:

\begin{equation*}
\begin{split}
    \F =\{&123, 146, 157, 247, 256, 345, 367, 1234, 1235, 1236, 1237, 1246, 1247, 1256, \\ &1257, 1345, 1346, 1357, 1367, 1456, 1457, 1467, 1567, 2345, 2347, 2356, \\ &2367, 2456, 2457, 2467, 2567, 3456, 3457, 3467, 3567\} \cup\binom{[7]}{5}\cup\binom{[7]}{6}.
\end{split}
\end{equation*}

The construction above can be thought of as follows: start with the Fano plane $\F_7 = \{123, 146, 157, 247, 256, 345, 367\}$ and let $\F$ consist of all those  $A\subseteq[n]$ for which there exists some $B\in\F_7$ such that $B\subseteq A$.

For $n=9$ the correct answer according to the LP solver is 97, beating the conjectured value of 93. The example is constructed from a particular 4-uniform, regular family $\F_9$ together with all supersets of its elements, as before. Here $\F_9$ is as follows:

\begin{equation*}
\begin{split}
    \F_9 =\{&1234, 1236, 1239, 1245, 1248, 1289, 1347, 1367, 1368, 1457, 1459, 1567, \\ &1578, 1589, 1679, 1689, 2356, 2359, 2379, 2456, 2468, 2478, 2567, 2578, \\ &2679, 2789, 3459, 3468, 3478, 3479, 3568, 3578, 3589, 4569, 4679, 4689\}
\end{split}
\end{equation*}

For $n=10$ we have a construction of size $197$, beating the value of $193$. It is presented in the Appendix.

~

\subsection{Multipartite intersecting families}\label{subsec:kat}

A celebrated theorem of Erd\H{o}s--Ko--Rado is the following:
\begin{theorem}[Erd\H{o}s--Ko--Rado~\cite{erdoskorado}\label{thm:erdoskorado}]
Given integers $n,k$ with $k\leq n/2$, if $\F\subset \binom{[n]}{k}$ is intersecting then 
$$|\F|\leq \binom{n-1}{k-1}.$$
\end{theorem}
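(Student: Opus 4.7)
The plan is to use Katona's cyclic permutation method, which gives the cleanest proof of the Erd\H{o}s--Ko--Rado theorem. The main idea is a double-counting argument over cyclic arrangements of $[n]$, combined with an elementary combinatorial bound on how many pairwise-intersecting arcs of length $k$ can lie on a cycle of length $n \geq 2k$.

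First I would set up the double count. Consider all $n!$ cyclic arrangements of $[n]$ (i.e.\ bijections from $[n]$ to positions around a circle). For each arrangement there are exactly $n$ \emph{arcs} of length $k$, namely the sets of $k$ consecutive positions. Each fixed $k$-subset $S \subseteq [n]$ appears as an arc in exactly $n \cdot k! \cdot (n-k)!$ of these arrangements: choose the starting position of the arc ($n$ ways), then arrange $S$ within it and the remainder outside it. I would then count pairs $(\pi, A)$ where $\pi$ is a cyclic arrangement and $A$ is an arc of $\pi$ with $A \in \F$; this equals $|\F| \cdot n \cdot k!(n-k)!$.

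Next I would prove the key lemma: for any fixed cyclic arrangement, at most $k$ of the $n$ arcs of length $k$ can lie in $\F$. Suppose arc $A_0$, at positions $0,1,\dots,k-1$, belongs to $\F$. Any other arc of $\F$ must intersect $A_0$, hence its starting position lies in $\{-(k-1),\dots,-1\}\cup\{1,\dots,k-1\}$ modulo $n$. I would then pair up these $2k-2$ starting positions as $\{j, j-k\}$ for $j=1,\dots,k-1$; using $n \geq 2k$, the arcs starting at $j$ and $j-k$ are disjoint, so $\F$ contains at most one arc from each pair. This gives at most $(k-1)+1 = k$ arcs from $\F$ in this arrangement.

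Combining the two, I would conclude
\[
|\F| \cdot n \cdot k!(n-k)! \;\leq\; n! \cdot k,
\]
which rearranges to $|\F| \leq \binom{n-1}{k-1}$. The only genuine step requiring care is the pairing argument for the lemma, where the hypothesis $n \geq 2k$ is used to ensure that positions $j$ and $j-k$ really are distinct modulo $n$ and that the two corresponding arcs are disjoint; everything else is bookkeeping. An alternative route would be the original shifting proof, but the cyclic method is shorter and avoids the technical verification that shifting preserves the intersecting property.
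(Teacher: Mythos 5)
Your proof is correct: this is Katona's cyclic permutation argument, and every step checks out. The double count of pairs $(\pi,A)$ gives $|\F|\cdot n\cdot k!(n-k)!$ on one side; the pairing of starting positions $\{j,\,j-k\}$ for $j=1,\dots,k-1$ correctly covers all $2k-2$ candidate arcs meeting a fixed arc $A_0$, and the hypothesis $n\ge 2k$ is used exactly where it is needed (both to ensure that only these $2k-2$ arcs can meet $A_0$, and to ensure that the arcs starting at $j$ and $j-k$ occupy $2k$ distinct positions and are therefore disjoint); the final rearrangement to $|\F|\le\binom{n-1}{k-1}$ is routine. Note, however, that the paper does not prove this statement at all: it is quoted as the classical Erd\H{o}s--Ko--Rado theorem with a citation to the original paper, serving only as background for the multipartite results in Section~3.3. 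So there is no in-paper argument to compare against; your write-up is simply a complete, standard proof of a result the paper takes as known.
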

Equality in Theorem~\ref{thm:erdoskorado} is attained by the family of all $k$-sets containing a fixed element, which we refer to as a \emph{trivially intersecting family} or a \emph{star}. Hilton and Milner~\cite{hiltonmilner} found the largest intersecting, but not trivially intersecting family:
\begin{theorem}[Hilton--Milner~\cite{hiltonmilner}\label{thm:hiltonmilner}]
If $2k\leq n$ and $\F$ is an intersecting but not trivially intersecting family in $\binom{[n]}{k}$ then
$$|\F|\leq 1+\binom{n-1}{k-1} - \binom{n-k-1}{k-1}.$$
\end{theorem}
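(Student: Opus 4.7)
The plan is to prove the Hilton--Milner bound by the standard shifting approach. Recall the shift $S_{ij}$ for $i<j$: replace each $F \in \F$ with $j \in F$, $i \notin F$, and $(F \setminus \{j\}) \cup \{i\} \notin \F$ by $(F \setminus \{j\}) \cup \{i\}$. This preserves $|\F|$ and the intersecting property (routine checks). Iterating, one aims to assume that $\F$ is left-compressed, i.e.\ fixed by every $S_{ij}$. The key subtlety, which I expect to be the main obstacle, is that shifting can turn a non-trivially intersecting family into a star; such ``star-creating'' shifts must be handled separately, most cleanly by induction on $n$ after analysing the pairs $F, (F \setminus \{j\}) \cup \{i\}$ that must coexist in $\F$ whenever $S_{ij}$ would collapse $\F$ onto a star.

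In the left-compressed case that remains not a star, the first step is to show that $F^{*} := \{2, 3, \ldots, k+1\} \in \F$. Pick any $G \in \F$ with $1 \notin G$, write $G = \{f_1 < \cdots < f_k\}$, and apply the shifted-closure condition successively with $(i,j) = (2, f_1), (3, f_2), \ldots$, each step replacing an element of $G$ by a smaller one while keeping $1$ outside; this produces $F^{*} \in \F$. Split $\F = \F_1 \sqcup \F_{\bar 1}$ with $\F_1 = \{F \in \F : 1 \in F\}$. Every $F \in \F_1$ must intersect $F^{*}$, and the number of $k$-sets containing $1$ that meet $\{2, \ldots, k+1\}$ is exactly $\binom{n-1}{k-1} - \binom{n-k-1}{k-1}$, giving the desired bound on $|\F_1|$.

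For $\F_{\bar 1}$ one argues by a trade-off. If $\F_{\bar 1} = \{F^{*}\}$ we are done, giving the additive ``$+1$'' in the bound. Otherwise, any extra $G \in \F_{\bar 1}$ with $G \neq F^{*}$ forces every member of $\F_1$ to also intersect $G$, which eliminates at least the $k$-sets $\{1\} \cup S$ with $S$ hitting $F^{*}$ only in $F^{*} \setminus G$ and otherwise lying in $[n] \setminus (\{1\} \cup G)$; using $n \geq 2k$ one checks that the number of such forbidden sets is at least the $+1$ one pays by adding $G$. Iterating this trade-off handles larger $|\F_{\bar 1}|$, and combining with the bound on $|\F_1|$ yields $|\F| \leq 1 + \binom{n-1}{k-1} - \binom{n-k-1}{k-1}$, as required.
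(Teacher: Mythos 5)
First, note that the paper does not prove Theorem~\ref{thm:hiltonmilner} at all: it is quoted as a classical result of Hilton and Milner and used only as background for Section~\ref{subsec:kat}, so there is no internal proof to compare against. Your sketch follows the classical shifting route (Frankl's proof), which is a perfectly reasonable choice, and several of its steps are sound: shifts preserve size and the intersecting property; a left-compressed non-trivial family contains $F^{*}=\{2,\dots,k+1\}$ by the descent argument you describe; and the count of $k$-sets containing $1$ and meeting $F^{*}$ is indeed $\binom{n-1}{k-1}-\binom{n-k-1}{k-1}$, which bounds $|\mathcal{F}_1|$.

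However, there are two genuine gaps, and they are exactly the two places where this proof is actually hard. (a) The reduction to a left-compressed family that is \emph{still} non-trivially intersecting is only asserted; ``handled by induction on $n$'' is not an argument, and this is not how the standard proofs deal with star-creating shifts (Frankl's treatment analyses the last shift $S_{ij}$ that would create a star, deduces that every member of the current family meets $\{i,j\}$, and finishes that case by a separate count). Without this, the whole reduction is unjustified. (b) The final ``trade-off'' for $\mathcal{F}_{\bar 1}$ does not work as stated. What you actually prove is that each $G\in\mathcal{F}_{\bar 1}\setminus\{F^{*}\}$ excludes \emph{at least one} set of $\mathcal{A}=\{A:1\in A,\ A\cap F^{*}\neq\emptyset\}$ from $\mathcal{F}_1$ (indeed at least $\binom{n-k-2}{k-2}\ge 1$ of them). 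But to conclude $|\mathcal{F}_{\bar 1}|-1\le|\mathcal{A}\setminus\mathcal{F}_1|$ you need the excluded sets to be \emph{distinct} across different $G$'s, i.e.\ an injection $\mathcal{F}_{\bar 1}\setminus\{F^{*}\}\to\mathcal{A}\setminus\mathcal{F}_1$; two different $G$'s can easily forbid the same set $\{1\}\cup S$, so ``iterating the trade-off'' does not accumulate. Constructing such an injection is the real content of the injective proofs of Hilton--Milner (e.g.\ Hurlbert--Kamat), and the standard alternatives finish instead via the cycle method or the Kruskal--Katona theorem applied to the cross-intersecting pair $\mathcal{F}(\bar 1),\mathcal{F}(1)$. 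As written, the proposal establishes the bound on $|\mathcal{F}_1|$ but not on $|\mathcal{F}|$.
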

Let $X_1$ and $X_2$ be disjoint sets of size $n_1$ and $n_2$ respectively, and denote by $\binom{X_1,X_2}{k,\ell}$ the family of all sets $S\subset X_1\dot{\cup} X_2$ with $|S\cap X_1|=k$ and $|S\cap X_2|=\ell$. Frankl~\cite{franklbip} and Katona~\cite{katona} considered intersecting families in $\binom{X_1,X_2}{k,\ell}$. As before, a family $\F\subset\binom{X_1,X_2}{k,\ell}$ is trivially intersecting if all elements of $\F$ contain a fixed element. Katona~\cite{katona} observed that if $x\in X_1$ is an arbitrary element and $K\subset X_1\setminus \{x\}$ is a set of size $k$ then the family
$$\F=\left\{F\in\binom{X_1,X_2}{k,\ell}: x\in F, F\cap K \neq \emptyset\right\}\cup\{K\}$$
is intersecting but not trivially intersecting. Motivated by this, Katona~\cite{katona} made the following conjecture.
\begin{conjecture}[Katona~\cite{katona}\label{conj:katona1}]
If $\F$ is an intersecting but not trivially intersecting subfamily of $\binom{X_1,X_2}{k,\ell}$ then
\begin{equation*}
    \begin{split}
        |\F| \leq  \max\bigg\{& \left(1+\binom{n_1-1}{k-1}-\binom{n_1-k-1}{k-1}\right)\binom{n_2}{\ell}, \\& \binom{n_1}{k} \left(1+\binom{n_2-1}{\ell-1}-\binom{n_2-\ell-1}{\ell-1}\right) \bigg\}.
    \end{split}
\end{equation*}
\end{conjecture}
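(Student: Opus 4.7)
The plan is to mirror the IP template from Section~\ref{sec:linbas} and to search for counterexamples by computer. For each $A\in\binom{X_1,X_2}{k,\ell}$ introduce a 0-1 indicator $x_A$. Encode ``intersecting'' via the constraints $x_A+x_B\leq 1$ for every disjoint pair $A,B\in\binom{X_1,X_2}{k,\ell}$, and encode ``not trivially intersecting'' by the family of constraints
\[
\sum_{A:\,v\notin A} x_A\ \geq\ 1 \qquad\text{for every }v\in X_1\cup X_2,
\]
each asserting that some member of $\F$ avoids $v$; note these also automatically rule out the degenerate single-set case. Maximize $\sum_A x_A$. Given how modest the IPs in this paper typically are, I expect this to be tractable for small $n_1,n_2,k,\ell$.

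The next step is to run the solver on a grid of small parameter tuples, comparing the optimum against the conjectured $\max\{\,\cdot\,,\,\cdot\,\}$. The most promising regime feels like the strongly asymmetric one ($n_1\ll n_2$ or $k\neq\ell$), since there the two terms in the conjectured maximum differ substantially and there is room for a hybrid construction --- say, a Hilton--Milner-type kernel on one side combined with something non-obvious on the other --- to exceed both simultaneously. A useful secondary heuristic is to restrict attention to up-closed families within each uniform ``layer'', as was the case for the counterexamples to Conjectures~\ref{conj:huang1} and~\ref{conj:huang2} in Section~\ref{subsec:int}.

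If a counterexample emerges, the real work begins: the solver's output must be distilled into an explicit infinite construction (as with the Fano-plane example for Huang's conjecture), since a single small counterexample is combinatorially unsatisfying on its own. The main obstacle, in my view, is exactly this structural extraction --- identifying a clean combinatorial recipe that keeps the family intersecting but non-trivially so, while beating both terms in the conjectured maximum simultaneously for a whole sequence of $(n_1,n_2,k,\ell)$. The solver typically returns a large unstructured list, and sometimes, as with the $n=10$ counterexample in Section~\ref{subsec:int}, no clean closed-form description readily presents itself --- in which case one is reduced to reporting the raw family and verifying asymptotic violation only through further IP runs at larger parameters.
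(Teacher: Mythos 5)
Your IP formulation is exactly the one the paper uses (the same disjoint-pair constraints $x_F+x_G\leq 1$ and the same $\sum_{v\notin F}x_F\geq 1$ encoding of non-trivial intersection for every $v\in X_1\cup X_2$), and the paper proceeds precisely as you propose: solve the IP for small parameters, find a violating family, and then distill it into explicit general constructions (Propositions~\ref{prop:katonaconstr} and~\ref{prop:katonaconstr2}). The only caveats are that your heuristics point slightly the wrong way --- the counterexample already occurs at the fully symmetric smallest admissible case $n_1=n_2=5$, $k=\ell=2$ (a family of size $35$ against the conjectured $\max\{30,28\}$), and the ``up-closed'' restriction is vacuous here since $\binom{X_1,X_2}{k,\ell}$ is uniform --- but your grid search over small tuples would find the example regardless.
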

We assume that the conditions $2k\leq n_1$ and $2\ell \leq n_2$ are implicitly implied in Conjecture~\ref{conj:katona1}. Katona also made a conjecture on \emph{two-sided intersecting} families, i.e.~families $\F$ for which there exist members $F_{11},F_{12},F_{21},F_{22}\in\F$ such that $F_{11}\cap F_{12}\cap X_1=\emptyset$ and $F_{21}\cap F_{22}\cap X_2=\emptyset$. 
\begin{conjecture}[\cite{katona}\label{conj:katona2}]
If $\F$ is a two-sided intersecting subfamily of $\binom{X_1,X_2}{k,\ell}$ then
\begin{equation*}
    \begin{split}
        |\F|\leq \max\bigg\{&\left(\binom{n_2-1}{\ell-1}-\binom{n_2-\ell-1}{\ell-1}\right)\binom{n_1}{k}+1+\binom{n_1}{k}-\binom{n_1-k}{k},\\
        &\left(\binom{n_1-1}{k-1}-\binom{n_1-k-1}{k-1}\right)\binom{n_2}{\ell}+1+\binom{n_2}{\ell}-\binom{n_2-\ell}{\ell}\bigg\}.
    \end{split}
\end{equation*}
\end{conjecture}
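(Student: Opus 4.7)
The plan is to search for a counterexample by phrasing the problem as an integer program, following the pattern of the previous sections. Introduce indicator variables $x_F \in \{0,1\}$ for each $F \in \binom{X_1,X_2}{k,\ell}$, encode the intersecting property with a constraint $x_A + x_B \leq 1$ for every disjoint pair $A, B$, and take as objective $\sum_F x_F$ to be maximized.

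The main obstacle is encoding the two-sided intersecting condition, since this is an existential statement demanding the presence of witnesses $F_{11}, F_{12}, F_{21}, F_{22} \in \F$ rather than a constraint applied universally. My preferred approach is to iterate over all candidate witness quadruples: for each pair $(F_{11}, F_{12})$ with $F_{11} \cap F_{12} \cap X_1 = \emptyset$ and each pair $(F_{21}, F_{22})$ with $F_{21} \cap F_{22} \cap X_2 = \emptyset$, solve the IP above with the four variables $x_{F_{11}}, x_{F_{12}}, x_{F_{21}}, x_{F_{22}}$ fixed to $1$, and output the maximum value over all valid choices. Quadruples in which the four prescribed sets fail to be pairwise intersecting may be discarded, since the IP is then infeasible. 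The symmetry between $X_1$ and $X_2$, as well as within each side, reduces the enumeration substantially. As an alternative avoiding the outer loop, one can work with a single IP augmented by auxiliary variables $z^{(1)}_{F,F'}$ and $z^{(2)}_{F,F'}$ indexed by candidate witness pairs on each side, with linking constraints $z^{(i)}_{F,F'} \leq x_F$ and $z^{(i)}_{F,F'} \leq x_{F'}$, and two further constraints $\sum z^{(1)}_{F,F'} \geq 1$ and $\sum z^{(2)}_{F,F'} \geq 1$ where the sums range over admissible pairs on each side.

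For parameters, I would start with small balanced cases such as $(n_1, n_2, k, \ell) = (4, 4, 2, 2)$ and its immediate perturbations, where the bound in Conjecture~\ref{conj:katona2} can be computed by hand and the number of variables is modest. Based on the pattern established by the counterexamples of Sections~\ref{subsec:anti} and~\ref{subsec:int}, I expect a counterexample to surface within seconds of Gurobi runtime. The final step is to inspect the extremal family returned by the solver and describe it in closed form, so that the refutation can be verified by hand independently of the computer; one would also check whether the counterexample generalizes to a parametric family valid for all sufficiently large $n_1, n_2$.
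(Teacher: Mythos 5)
Your proposal is correct and follows the same overall strategy as the paper: refute the conjecture by an IP search for a two-sided intersecting family exceeding the claimed bound, then read off and generalize the extremal family. The one place where you diverge is the encoding of the existential two-sided condition. The paper exploits the $\mathrm{Sym}(X_1)\times\mathrm{Sym}(X_2)$ symmetry more aggressively than you do: since the two factors act independently, one may normalize so that the $X_1$-traces of the first witness pair are two \emph{fixed} disjoint $k$-sets $L_1,L_2\subset X_1$ and the $X_2$-traces of the second pair are two fixed disjoint $\ell$-sets $R_1,R_2\subset X_2$, and then the whole condition becomes four linear constraints $\sum_{S\subset F}x_F\geq 1$ for $S\in\{L_1,L_2,R_1,R_2\}$ in a single IP. Your outer enumeration over witness quadruples and your auxiliary-variable formulation with the $z^{(i)}_{F,F'}$ are both valid but cost either many solver calls or a quadratic number of extra variables; pushing your own symmetry observation to its conclusion recovers the paper's single-IP encoding. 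On parameters: the paper's counterexample lives at $(n_1,n_2,k,\ell)=(5,5,2,2)$, where the conjectured bound is $28$ but a two-sided intersecting family of size $35$ exists (it also beats the bound $30$ of Conjecture~\ref{conj:katona1}); the paper does not report a counterexample at $(4,4,2,2)$, so be prepared for your starting case to confirm the bound and for the first perturbation upward to be the one that succeeds. The closed-form generalization you anticipate is exactly Proposition~\ref{prop:katonaconstr}, giving families of size $3m^2-10m+10$ for $m\geq 5$.
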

Once again we assume the conditions $2k\leq n_1$ and $2\ell\leq n_2$ are implicit. Let us now try to disprove both conjectures~\ref{conj:katona1} and~\ref{conj:katona2}. We phrase them as IPs as follows. We fix some $X_1,X_2,k,\ell$ and for each element $F$ of $\binom{X_1,X_2}{k,\ell}$ we introduce an indicator variable $x_F$. We force $\F$ to be intersecting by adding for each disjoint pair of sets $F,G$ a constraint $x_F+x_G\leq 1$. For Conjecture~\ref{conj:katona1} we ensure that $\F$ is not trivially intersecting by adding for each $x\in X_1\cup X_2$ a constraint
$$\sum_{x\not\in F}x_F\geq 1.$$
For Conjecture~\ref{conj:katona2} we force the two-sided intersecting property in a similar fashion. We pick two disjoint $k$-sets $L_1,L_2\subset X_1$ and two disjoint $\ell$-sets $R_1,R_2\subset X_2$. Then we add for each $S\in\{L_1,L_2,R_1,R_2\}$ the constraint
$$\sum_{S\subset F}x_F\geq 1.$$ Solving the IP directly yields a counterexample for both conjectures for $n_1=n_2=5$, $k=\ell=2$ in less than a second. Let $X=\{x_1,x_2,\ldots,x_5\}$ and $Y=\{y_1,y_2,\ldots,y_5\}$. Let $\F$ be the following family:
\begin{equation*}
    \begin{split}
        \F=&\left\{\{x_1,x_2\}\cup F: F\in\binom{Y}{2}, F\cap \{y_1,y_2\}\neq \emptyset\right\}\\\cup& \left\{\{x_1,x_3\}\cup F: F\in\binom{Y}{2}, F\cap \{y_1,y_2\}\neq \emptyset\right\}\\\cup& \left\{\{x_1,x_4\}\cup F:F\in\binom{Y}{2}\right\} \cup \left\{\{x_1,x_5\}\cup F:F\in\binom{Y}{2}\right\}\\\cup&\{\{x_4,x_5,y_1,y_2\}\}
    \end{split}
\end{equation*}
The size of $\F$ is 35, while the constructions in conjectures~\ref{conj:katona1} and~\ref{conj:katona2} have sizes 30 and 28 respectively. This construction generalizes for $\binom{X,Y}{2,2}$. For simplicity assume $|X|=|Y|\geq 5$.
\begin{proposition}\label{prop:katonaconstr}
Let $X=\{x_1,x_2,\ldots,x_m\}$ and $Y=\{y_1,y_2,\ldots,y_m\}$ be two disjoint sets of size $m\geq 5$. Then there is a $\F\subset\binom{X,Y}{2,2}$ that is two-sided intersecting, with $|F|\geq 3m^2-10m+10$.
\end{proposition}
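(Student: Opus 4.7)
The plan is to exhibit an explicit family generalizing the $m=5$ construction. The critical ``odd'' set is $S=\{x_4,x_5,y_1,y_2\}$; every other member of $\F$ will contain $x_1$, and must independently intersect $S$ either on the $X$-side (through $x_4$ or $x_5$) or on the $Y$-side (through $y_1$ or $y_2$). Accordingly I would split the $x_1$-star into two blocks and set $\F=\F_1\cup\F_2\cup\{S\}$, where
\begin{align*}
\F_1 &= \left\{\{x_1,x_j\}\cup F : j\in\{4,5\},\ F\in\binom{Y}{2}\right\},\\
\F_2 &= \left\{\{x_1,x_j\}\cup F : j\in\{2,3\}\cup\{6,\ldots,m\},\ F\in\binom{Y}{2},\ F\cap\{y_1,y_2\}\neq\emptyset\right\}.
\end{align*}
Specialising to $m=5$ recovers exactly the family displayed before the proposition, which is reassuring.

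The verification that $\F$ is intersecting is then routine: any two sets from $\F_1\cup\F_2$ share $x_1$; sets in $\F_1$ meet $S$ via $x_4$ or $x_5$; and sets in $\F_2$ meet $S$ inside $\{y_1,y_2\}$ by the defining restriction on $F$. For the two-sided property, the pair $\bigl(\{x_1,x_2,y_1,y_3\},\,S\bigr)\subset\F$ has disjoint $X$-parts, while the pair $\bigl(\{x_1,x_4,y_1,y_2\},\,\{x_1,x_4,y_3,y_4\}\bigr)\subset\F_1$ has disjoint $Y$-parts, both valid whenever $m\geq 5$. Counting, $|\F_1|=2\binom{m}{2}=m^2-m$; using $\binom{m}{2}-\binom{m-2}{2}=2m-3$ and $|\{2,3\}\cup\{6,\ldots,m\}|=m-3$, one obtains $|\F_2|=(m-3)(2m-3)=2m^2-9m+9$; and adding $1$ for $S$ gives $|\F|=3m^2-10m+10$, as desired.

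The only genuine content is guessing the correct generalization --- the verification and counting are straightforward. Leaving $F$ unrestricted for \emph{all} choices of $j$ breaks intersecting against $S$, while globally imposing $F\cap\{y_1,y_2\}\neq\emptyset$ costs two full $\binom{m}{2}$ blocks and drops the size below Katona's bound. The asymmetry --- exempting exactly the two indices $j\in\{4,5\}$, precisely those for which the pair $\{x_1,x_j\}$ already meets $S$ on the $X$-side --- is what pays for the leading coefficient $3m^2$. Once the $m=5$ counterexample found by the LP solver is in hand, reverse-engineering this structure is the natural next step.
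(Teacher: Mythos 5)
Your construction is identical to the paper's (the paper merely relabels so that the special set is $\{x_2,x_3,y_1,y_2\}$ and the two unrestricted indices are $j\in\{2,3\}$ rather than $j\in\{4,5\}$), and your count, intersection check, and two-sided witnesses are all correct. In fact you verify more than the paper does, since its proof only displays the family and states the size without checking the two-sided property explicitly.
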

\begin{proof}
Let $\F$ be defined as follows.
\begin{equation*}
    \begin{split}
        \F=&\left\{G\cup F: G\in\binom{X}{2}, F\in\binom{Y}{2}, x_1\in G, F\cap \{y_1,y_2\}\neq \emptyset\right\}\\\cup& \left\{\{x_1,x_2\}\cup F:F\in\binom{Y}{2}\right\} \cup \left\{\{x_1,x_3\}\cup F:F\in\binom{Y}{2}\right\}\\\cup&\{\{x_2,x_3,y_1,y_2\}\}
    \end{split}
\end{equation*}
The size of $\F$ is then given by 
$$|\F|=(m-3)\left(\binom{m}{2}-\binom{m-2}{2}\right)+2\binom{m}{2}+1=3m^2-10m+10.$$
\end{proof}
We note that according to the LP solver, the construction in Proposition~\ref{prop:katonaconstr} is in fact the largest non-trivially intersecting (but not necessarily two-sided intersecting) family for $m=5,6$.

For $(n_1,n_2,k,\ell)=(7,7,3,3)$ we find a two-sided intersecting family of size 514, beating the values of 455 and 452 in conjectures~\ref{conj:katona1} and~\ref{conj:katona2} respectively.  Based on generalizing the construction given by the LP solver, we have the following bound. 
\begin{proposition}\label{prop:katonaconstr2}
Let $k,m$ be integers with $2k\leq m$. Let $X=\{x_1,x_2,\ldots,x_m\}$ and $Y=\{y_1,y_2,\ldots,y_m\}$ be two disjoint sets of size $m$. Then there is a $\F\subset\binom{X,Y}{k,k}$ that is two-sided intersecting, with 
$$|\F|\geq \left(\binom{m-1}{k-1}-\binom{m-k-1}{k-1}\right)\binom{m}{k}+\binom{m-k-1}{k-1}\left(\binom{m}{k}-\binom{m-k}{k}\right) +1.$$
\end{proposition}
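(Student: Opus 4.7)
The plan is to exhibit $\F$ explicitly, generalizing the construction in Proposition~\ref{prop:katonaconstr}. Fix ``opposite'' sets $K = \{x_2,\ldots,x_{k+1}\} \subset X \setminus \{x_1\}$ and $L = \{y_1,\ldots,y_k\} \subset Y$, each of size $k$. I would take $\F$ to be the disjoint union of three pieces: (a) all $A \cup B$ with $A \in \binom{X}{k}$, $x_1 \in A$, $A \cap K \neq \emptyset$, and $B \in \binom{Y}{k}$ arbitrary; (b) all $A \cup B$ with $x_1 \in A$, $A \cap K = \emptyset$, and $B \cap L \neq \emptyset$; and (c) the single set $K \cup L$. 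This mimics exactly the three-block structure seen in the $k=2$ case, where the first piece plays the role of the two ``stars through $x_1$ meeting $K$'', the second piece is the diverse remainder, and the third piece is the opposite set.

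Next I would verify the intersection properties. Any two members of (a) $\cup$ (b) share the element $x_1$, so intersectingness reduces to showing $K \cup L$ meets every element of (a) $\cup$ (b): it meets those in (a) inside $K$ (since $A \cap K \neq \emptyset$) and those in (b) inside $L$ (since $B \cap L \neq \emptyset$). For two-sided intersectingness, $F_{11} = K \cup L$ paired with any member of (b), for instance $\{x_1, x_{k+2}, \ldots, x_{2k}\} \cup (\{y_1\} \cup \{y_{k+1},\ldots,y_{2k-1}\})$, gives a pair whose $X$-parts are disjoint; symmetrically, $F_{21} = K \cup L$ paired with a member of (a) whose $B$-part lies in $Y \setminus L$, for instance $\{x_1,x_2,\ldots,x_k\} \cup \{y_{k+1},\ldots,y_{2k}\}$, gives a pair whose $Y$-parts are disjoint. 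Both witnesses are well-defined because $2k \leq m$.

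The counting is then routine. The three pieces are pairwise disjoint: (c) omits $x_1$, while (a) and (b) contain it, and (a), (b) are separated by whether $A \cap K = \emptyset$. The admissible $A$'s in (a) number $\binom{m-1}{k-1} - \binom{m-k-1}{k-1}$ (all $k$-subsets of $X$ through $x_1$, minus those avoiding $K$ altogether), each paired with $\binom{m}{k}$ choices of $B$. The admissible $A$'s in (b) number $\binom{m-k-1}{k-1}$ (choose $k-1$ further elements from $X \setminus (\{x_1\} \cup K)$, which has $m-k-1$ elements), each paired with $\binom{m}{k} - \binom{m-k}{k}$ choices of $B$ meeting $L$. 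Adding the singleton from (c) yields precisely the stated lower bound.

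I do not foresee a genuine obstacle: the construction is essentially forced by reading off the three summands of the bound, and the hypothesis $2k \leq m$ is exactly what ensures piece (b) is non-empty and that both two-sidedness witnesses exist. The one point requiring mild care is verifying that (a), (b), (c) are truly disjoint before summing, which is immediate from the $x_1$-membership and $A \cap K$ dichotomies above.
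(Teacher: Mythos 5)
Your construction is identical to the paper's: the paper takes $K_1=\{x_2,\ldots,x_{k+1}\}$, $K_2=\{y_1,\ldots,y_k\}$ and defines $\F$ by exactly the same three blocks (star through $x_1$ meeting $K_1$ with arbitrary $Y$-part; star through $x_1$ avoiding $K_1$ with $Y$-part meeting $K_2$; the single set $K_1\cup K_2$). The only difference is that you spell out the intersecting/two-sided/counting verifications, which the paper leaves implicit, so this is the same proof in more detail.
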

\begin{proof}
Let $K_1=\{x_2,x_3,\ldots,x_{k+1}\}$, $K_2=\{y_1,y_2,\ldots,y_{k}\}$ and define the family as
\begin{equation*}
    \begin{split}
        \F=&\left\{F_1\cup F_2: F_1\in\binom{X}{k}, F_2\in\binom{Y}{k},  x_1\in F_1, F_1\cap K_1\neq \emptyset, \right\}\\\cup& \left\{F_1\cup F_2: F_1\in\binom{X}{k}, F_2\in\binom{Y}{k},  x_1\in F_1, F_1\cap K_1=\emptyset,  F_2\cap K_2\neq\emptyset \right\}\\ \cup& \{K_1\cup K_2\}.
    \end{split}
\end{equation*}
\end{proof}

\noindent
It would be interesting to see whether this construction is best possible. 

Let us now turn to another related conjecture, by Frankl--Han--Huang--Zhao~\cite{franklhanhuangzhao}. We say that a family \emph{has the EKR property} if its largest intersecting subfamily is trivially intersecting.
\begin{conjecture}[\label{conj:franklhan}\cite{franklhanhuangzhao}]Suppose $n=n_1+\ldots +n_d$ and $k\geq k_1+\ldots +k_d$, where $n_i > k_i\geq 0$ are integers. Let $X_1\cup \ldots \cup X_d$ be a partition of $[n]$ with $|X_i|=n_i$, and $$\mathcal{H}=\left\{F\subseteq \binom{[n]}{k}: |F\cap X_i|\geq k_i \text{ for } i=1,\ldots,d\right\}.$$
If $n_i\geq 2k_i$ for all $i$ and $n_i>k-\sum_{j=1}^d k_j + k_i$ for all but at most one $i\in [d]$ such that $k_i>0$, then $\mathcal{H}$ has the EKR property.
\end{conjecture}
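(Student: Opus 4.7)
The plan is to phrase this as an integer program following the same template used for Katona's conjectures in Section~\ref{subsec:kat}, then either find a counterexample for small parameters or gather computational evidence for the conjecture. Given the paper's recurring theme, and the fact that the two closely related conjectures of Katona in the preceding subsection both fell to very small instances, my prior is that a counterexample is likely to exist, and the IP will locate it.

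Concretely, I fix small integers $d, n_1,\ldots,n_d, k_1,\ldots,k_d, k$ satisfying the hypotheses of Conjecture~\ref{conj:franklhan}, and build the family $\mathcal{H}$ explicitly. For each $F\in\mathcal{H}$ introduce an indicator variable $x_F\in\{0,1\}$. To force the selected subfamily $\F\subseteq\mathcal{H}$ to be intersecting, add $x_F+x_G\leq 1$ for each disjoint pair $F,G\in\mathcal{H}$. To rule out trivially intersecting subfamilies, for every $i\in[n]$ add the constraint
\[
\sum_{F\in\mathcal{H}:\, i\notin F} x_F \;\geq\; 1,
\]
which says that no single element is covered by all of $\F$. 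Maximizing $\sum_{F\in\mathcal{H}} x_F$ subject to these constraints then returns the size of the largest non-trivially intersecting subfamily of $\mathcal{H}$. If this value exceeds $\max_{i\in[n]} |\{F\in\mathcal{H}:\,i\in F\}|$, the largest star in $\mathcal{H}$, we have a direct counterexample to the EKR property; otherwise the conjecture is verified for those parameters.

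In choosing parameters to test, I would first try configurations near the boundary of the hypothesis, where the constraints $n_i\geq 2k_i$ and $n_i>k-\sum_j k_j + k_i$ are just barely met, since this is usually where such conjectures fail (compare the small counterexample with $n_1=n_2=5$, $k=\ell=2$ that broke both of Katona's conjectures). Likely candidates are $d=2$, $k_1=k_2=1$, $k=2$, with $n_1, n_2$ small; or $d=2$, $k_1=k_2=2$, $k=4$ with $n_1=n_2=5$, and similar small regimes.

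The main obstacle is computational scale: $|\mathcal{H}|$ grows rapidly with $n$ and $d$, and the number of intersection constraints is quadratic in $|\mathcal{H}|$, so only a modest range of parameters is feasible. A secondary obstacle is interpretive: if the LP finds a counterexample, extracting a clean, parameterized construction from the computer's output and verifying by hand that it is non-trivially intersecting, lies in $\mathcal{H}$, and beats every star of $\mathcal{H}$ for all sufficiently large parameters is typically the most delicate step, as it was for Propositions~\ref{prop:katonaconstr} and~\ref{prop:katonaconstr2}. If instead the IP confirms the EKR property for every tractable instance, the proposal would simply record this as evidence that the conjecture is plausible and should be attacked by traditional methods.
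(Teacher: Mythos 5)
Your IP formulation is essentially the paper's: indicator variables for members of $\mathcal{H}$, pairwise disjointness constraints, and a comparison of the optimum against the largest star (the paper omits your non-triviality constraints and simply maximizes over all intersecting subfamilies, which is equivalent for refutation purposes). However, two points in your plan deserve correction. First, every explicit parameter set you list has $k=k_1+\cdots+k_d$, which collapses $\mathcal{H}$ to the multipartite family $\binom{X_1,X_2}{k_1,k_2}$ of the Katona setting, where the largest star (size $\binom{n_1-1}{k_1-1}\binom{n_2}{k_2}$ or its symmetric counterpart) comfortably beats the non-trivial families found there — e.g.\ for $n_1=n_2=5$, $k_1=k_2=2$ the star has $40$ sets versus the $35$-set non-trivial family; your search would thus confirm rather than refute. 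The counterexample in the paper crucially uses slack $k>\sum_i k_i$: it takes $d=2$, $n_1=n_2=4$, $k_1=2$, $k_2=1$, $k=4$, where the largest star has size $30$ but an intersecting subfamily of size $34$ exists. Second, you miss the paper's preliminary observation that the conjecture as literally stated fails for a trivial reason: its hypotheses do not force $n\geq 2k$ (e.g.\ $n_1=3$, $n_2=4$, $k_1=1$, $k_2=2$, $k=4$ gives $n=7<8$), in which case $\mathcal{H}$ is itself intersecting and non-trivially so; one must first repair the statement by adding $n\geq 2k$ before the IP search becomes the meaningful test.
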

We first observe that if e.g.~$d=2, n_1=3, n_2=4, k_1=1, k_2=2, k=4$ then all conditions of the conjecture are satisfied but $\mathcal{H}$ itself is (non-trivially) intersecting, and so Conjecture~\ref{conj:franklhan} cannot be true. In particular for this set of parameters $|\mathcal{H}|=30$ but the largest trivially intersecting subfamily of $\mathcal{H}$ has size 18. We will thus assume that the $n\geq 2k$ condition was intended to be a part of the statement of Conjecture~\ref{conj:franklhan}.

We phrase this problem as an IP in much the same way as before. Fix some values for the parameters, and introduce indicator variables $x_F$ for each $F\in\mathcal{H}$. Then add constraints $x_F+x_H\leq 1$ for each disjoint $F,H\in\mathcal{H}$. Solving the LP yields counterexamples for several sets of parameters. The smallest we could find is for the values
$d = 2, n_1=n_2=4, k_1=2, k_2=1, k=4$, so that
$$\mathcal{H}=\left\{F\subseteq \binom{[8]}{4}: |F\cap \{1,2,3,4\}|\geq 2, ~ |F\cap \{5,6,7,8\}|\geq 1\right\}.$$
The largest trivially intersecting family in $\mathcal{H}$ has size 30, but its largest intersecting subfamily has order 34:

\begin{equation*}
    \begin{split}
        \F =\{&1235, 1236, 1237, 1238, 1245, 1246, 1247, 1248, 1256, 1267, 1268, 1278, \\ &1345, 1346, 1347, 1348, 1358, 1368, 1378, 1467, 1468, 2345, 2346, 2347, \\ &2348, 2356, 2367, 2368, 2378, 2458, 2468, 2478, 3467, 3468\}
    \end{split}
\end{equation*}

\subsection{A forbidden trace problem}\label{subsec:anstee}

To introduce the definition of a \emph{forbidden configuration}, we will use the language of matrix theory and identify set systems with their adjacency matrix. An $m \times n$ simple matrix (i.e.~with no repeated columns) $A$ with all entries in $\{0,1\}$ can be thought of as a family $\mathcal{A}$ of $n$ subsets of $[m]$:  the rows index the elements of the ground sets and the columns index the subsets. So the number of columns of $A$ is equal to $|\mathcal{A}|$. 

Now let $F$ be a $k\times \ell$ matrix with all entries in $\{0,1\}$. We say that a matrix $A$ \emph{has a configuration $F$} if a submatrix of $A$ is a row and column permutation of $F$  (this is sometimes called \emph{trace} in the language of sets).

Many classical problems in extremal set theory can be phrased as problems about forbidden configurations. One standard example is bounding the size of a family of VC dimension at most $k$. We say that a family $\mathcal{A}\subset 2^{[n]}$ has VC dimension at least $k$ if there exists a set $S\subset [n]$ of size $|S|=k$ such that $|A\cap S : A\in\mathcal{A}|=2^k$. Hence a family $\mathcal{A}$ has VC dimension less than 3 if and only if the corresponding matrix $A$ does not have configuration $F_3$, where $F_3$ is the matrix
$$F_3=\begin{bmatrix}
    0 & 1 & 0 & 0 & 1 & 1 & 0 & 1 \\
    0 & 0 & 1 & 0 & 1 & 0 & 1 & 1 \\
    0 & 0 & 0 & 1 & 0 & 1 & 1 & 1
\end{bmatrix}$$

Denote by $\mathrm{forb}(m,F)$ the maximum number of columns in a matrix $A$ without a configuration $F$. So e.g.~by a classical theorem of Sauer--Shelah we have $\mathrm{forb}(m,F_3)=\binom{m}{0}+\binom{m}{1}+\binom{m}{2}$. We refer the reader to the excellent survey of Anstee~\cite{anstee} on more background on forbidden configuration problems.

Steiner triple systems are one of the most classical objects studied in combinatorial design theory, dating back to Kirkman~\cite{kirkman}. We say a family  of $3$-element subsets, called \emph{blocks}, of an $n$-element set $X$ is a triple system of multiplicity $\lambda$ if any pair of distinct elements of $X$ are contained in precisely $\lambda$ blocks. Anstee raised the following problem, which we will disprove:

\begin{problem}[\cite{anstee}\label{prob:anstee}]
Show that for those $m$ for which a triple system of multiplicity 2 exists,
$$\text{forb}\left(m,\begin{bmatrix}
  1 & 1 & 1 & 1\\
1 & 1 & 1 & 1\\
1 & 0 & 0 & 0
\end{bmatrix}\right) = \frac{5}{3}\binom{m}{2}+\binom{m}{1}+\binom{m}{0}+\binom{m}{m}.$$
\end{problem}

Denote the forbidden matrix by $A$. As a triple system of multiplicity one has order $\frac13\binom{m}{2}$, it is our guess that the intended construction achieving the bound on the right hand side is $\binom{[m]}{\leq 2}\cup \{[m]\} $ together with a triple system of multiplicity two. However, this construction does in fact contain the forbidden matrix $A$ as a configuration, but removing the single set $\{[m]\}$ would fix the issue. Nevertheless, we are able to find a construction that is larger than the value on the right hand side.

We can phrase this problem as an IP as follows. We introduce for each set $S\subseteq [n]$ a 0-1 valued indicator variable $x_S$. For any four distinct sets $A,B,C,D$ if there exist three elements of the ground set such that the trace of $A,B,C,D$ on these three elements would give the forbidden matrix $A$, then we add a constraint $x_A+x_B+x_C+x_D\leq 3$. The objective is then to maximize the sum of all variables. 

We begin by noting that there exists a triple system of multiplicity two for $m=4,6,7,9$, see~\cite{designhandbook}. Denote by $S_2(m)$ a triple system of multiplicity two and order $m$, for those $m$ where it exists.  Next we observe that we may assume the family $\F$ contains all sets of size 0 or 1 as these do not affect containment of our forbidden configuration, hence we may restrict our search space on $\binom{[m]}{\geq 2}$. Solving the IP directly for $m=6$ we find that statement of Problem~\ref{prob:anstee} is false, the correct answer is 25 rather than 26 -- here 25 is given by the natural construction $\F = \binom{[6]}{\leq 2}\cup S_2(6)$. 

For $m=9$ solving the IP was infeasible with the author's laptop. By making the heuristic assumption that the optimal family should contain all sets of size at most two and restricting the search to $\binom{[m]}{3}\cup\binom{[m]}{4}$ we find a construction of size $71$ within three minutes -- this matches the bound given in Problem~\ref{prob:anstee}, and hence beats the natural construction of $\F=\binom{[m]}{\leq 2}\cup S_2(m)$ by one! The construction given by the LP solver is as follows: take all sets of size at most two, together with a triple system of multiplicity two, which contains the triples $\{123,124,134,234\}$, and add the single set $\{1234\}$. Such a triple system indeed exists, see e.g.~\cite{designhandbook}. Hence if we could find a triple system of multiplicity two of some higher order, that contains the triples $\{123,124,134,234, 567,568,578,678\}$ then we could add two 4-sets and beat the bound in Problem~\ref{prob:anstee}. We will need the following theorem of Colbourn--Hamm--Lindner--Rodger:
\begin{theorem}[Colbourn--Hamm--Lindner--Rodger~\cite{colbournembed}\label{thm:tripleembed}]
A partial triple system of order $m$ and multiplicity $\lambda$ can be embedded in a triple system of multiplicity $\lambda$ and order at most $4(3\lambda/2 + 1)m + 1$. 
\end{theorem}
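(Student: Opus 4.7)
The plan is to follow the classical deficiency-plus-completion strategy for embedding partial triple systems. Let $(X,\mathcal{B})$ be a partial triple system of multiplicity $\lambda$ with $|X|=m$, and define its \emph{leave} to be the multigraph $L$ on vertex set $X$ in which the pair $\{x,y\}$ appears with multiplicity $\lambda - r_{xy}$, where $r_{xy}$ is the number of blocks of $\mathcal{B}$ containing both $x$ and $y$. The goal is to choose a set $Y$ disjoint from $X$ and a collection of triples $\mathcal{B}'$ on $X\cup Y$ so that every pair of $X\cup Y$ is covered exactly $\lambda$ times by $\mathcal{B}\cup\mathcal{B}'$, subject to $|X\cup Y|\leq 4(3\lambda/2+1)m+1$.

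First I would set $N=|Y|$ to be the smallest integer that both satisfies the arithmetic necessary conditions for a triple system of multiplicity $\lambda$ on $m+N$ points to exist (so $\lambda(m+N-1)$ is even and $\lambda\binom{m+N}{2}$ is divisible by $3$) and is large enough to absorb $L$. The completion would then proceed in three stages: (i) for each edge $e\in L$ assign a new point $y_e\in Y$ and introduce the triple $e\cup\{y_e\}$, arranging the assignment via an edge-colouring of $L$ (for instance using Vizing's theorem after grouping parallel edges by their multiplicity) so that the induced degree on each $y\in Y$ is well controlled; (ii) add \emph{mixed} triples of the forms $\{x,y,y'\}$ and $\{x,x',y\}$ to cover the remaining pairs between $X$ and $Y$ to multiplicity exactly $\lambda$ and to drive the residual degree of every $x\in X$ to zero; (iii) complete the triple system on $Y$ alone, which is possible since the arithmetic conditions on $N$ were arranged at the start and reduces to a known existence theorem for triple systems of multiplicity $\lambda$ on $N$ points.

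The main obstacle is the bookkeeping in stage (ii): once the leave edges and the stage-(i) mixed triples are laid down, the residual degrees at the points of $Y$ must be perfectly balanced for stage (iii) to be realizable. Controlling these degrees forces a generous choice of $N$. The factor $3\lambda/2$ in the final bound tracks the roughly $\lambda\binom{m}{2}$ edges of $L$ that need to be absorbed (each using one new point and contributing a bounded amount to some $Y$-degree), while the additional factor of $4$ supplies enough slack to simultaneously enforce the divisibility conditions on $m+N$ and the degree-balancing identities needed for stage (iii), uniformly in $\lambda$ and $m$. The expression $4(3\lambda/2+1)m+1$ is essentially the cheapest value of $N$ for which all of these constraints can be met in one sweep; a tighter bound would require a finer case analysis by residues of $m$ and $\lambda$, which I would expect to be the most delicate part of the argument.
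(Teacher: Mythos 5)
This statement is not proved in the paper at all: it is an external result of Colbourn, Hamm, Lindner and Rodger, quoted verbatim and used as a black box to prove Proposition~3.11. So there is no in-paper argument to compare yours against; the only question is whether your sketch would itself constitute a proof, and it would not. What you have written is a plan, not an argument. The entire content of the theorem is the quantitative claim that $N = 4(3\lambda/2+1)m+1-m$ new points \emph{suffice}, and that is exactly the point your proposal does not establish: stage (ii) is described as ``bookkeeping,'' but making the residual degrees at the new points simultaneously nonnegative, correctly parity-constrained, and completable in stage (iii) is where the whole difficulty of such embedding theorems lives, and you explicitly defer it (``I would expect [this] to be the most delicate part''). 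The paragraph explaining where the factors $3\lambda/2$ and $4$ come from is a heuristic accounting of edge counts, not a derivation; nothing in the proposal rules out the possibility that the construction as described gets stuck with an uncompletable residual multigraph on $Y$.

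It is also worth noting that your proposed route differs in kind from how results of this type are actually proved in that literature. The Colbourn--Hamm--Lindner--Rodger argument (and its predecessors for $\lambda=1$) does not proceed by directly absorbing the leave with one new point per deficient edge; it goes through embeddings of partial commutative quasigroups / Latin squares and then converts these to triple systems via Bose- and Skolem-type constructions, which is precisely how one gets a clean linear bound in $m$ with an explicit constant without the residue-by-residue case analysis you anticipate. If you want to cite this theorem you may of course do so without proof, as the paper does; but if you intend to prove it, the quasigroup route is the one that actually closes the gap you have left open.
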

\begin{proposition}\label{prop:tripleembed}
For every $k\geq 1$ there exists an $m\leq 64k+1$ such that $$\text{forb}\left(m,\begin{bmatrix}
  1 & 1 & 1 & 1\\
1 & 1 & 1 & 1\\
1 & 0 & 0 & 0
\end{bmatrix}\right) \geq \frac{5}{3}\binom{m}{2}+\binom{m}{1}+\binom{m}{0}+k.$$
\end{proposition}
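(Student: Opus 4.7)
The plan is to generalize the $m=9$ construction described just before Theorem~\ref{thm:tripleembed} by packing $k$ disjoint copies of the $S_2(4)$ ``local structure'' into a large triple system of multiplicity $2$; each such copy will license one extra $4$-set in the family.

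First I would fix $k$ pairwise disjoint $4$-element sets $X_1,\ldots,X_k\subset[4k]$ and consider the collection $\mathcal{P}=\bigcup_{i=1}^k\binom{X_i}{3}$. Inside each $X_i$ every pair lies in exactly two of the four triples on $X_i$, and pairs meeting two distinct $X_i$'s lie in no triple of $\mathcal{P}$, so $\mathcal{P}$ is a partial triple system of multiplicity $2$ on $4k$ points. By Theorem~\ref{thm:tripleembed} (with $\lambda=2$) it embeds into a triple system $\mathcal{T}$ of multiplicity $2$ on some ground set $[m]$ with
$$m\leq 4(3\cdot 2/2+1)\cdot 4k+1=64k+1.$$
I would then take
$$\F=\binom{[m]}{\leq 2}\cup \mathcal{T}\cup\{X_1,\ldots,X_k\}.$$

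The size count is immediate: $\binom{[m]}{\leq 2}$ contributes $\binom{m}{0}+\binom{m}{1}+\binom{m}{2}$, the multiplicity-$2$ system $\mathcal{T}$ contributes $\tfrac{2}{3}\binom{m}{2}$, and the $X_i$'s add $k$ further $4$-sets, giving exactly $\tfrac{5}{3}\binom{m}{2}+\binom{m}{1}+\binom{m}{0}+k$. The main content of the proof is verifying that $\F$ contains no copy of the forbidden configuration, i.e.\ that no four members of $\F$ share a common pair $\{x,y\}$ while some further element $z$ appears in exactly one of them. Because $\mathcal{T}$ has multiplicity exactly $2$ and the $X_i$'s are pairwise disjoint, any pair $\{x,y\}$ lies in at most $1+2+1=4$ members of $\F$, with equality precisely when $\{x,y\}\subset X_i$ for some $i$. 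In that case the four sets are $\{x,y\}$, the two triples of $\mathcal{T}$ through $\{x,y\}$, and $X_i$ itself; crucially, these two triples must lie entirely inside $X_i$, since $\binom{X_i}{3}\subset\mathcal{T}$ already saturates the multiplicity-$2$ quota for every pair in $X_i$. A short case check then shows that every $z\notin\{x,y\}$ lies in either $0$ or $2$ of these four sets (the two elements of $X_i\setminus\{x,y\}$ appear twice, and anything outside $X_i$ appears zero times), so the third row of the forbidden matrix cannot be realised.

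The subtlest point is this last observation: once $\binom{X_i}{3}\subset\mathcal{T}$, the multiplicity-$2$ condition is what forces the two triples of $\mathcal{T}$ through any pair inside $X_i$ to lie wholly in $X_i$, and this is what makes the case analysis close cleanly. Everything else reduces to direct counting together with one appeal to Theorem~\ref{thm:tripleembed}.
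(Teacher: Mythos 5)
Your proposal is correct and is essentially identical to the paper's proof: the same partial system $\bigcup_i\binom{X_i}{3}$ on $4k$ points, the same appeal to Theorem~\ref{thm:tripleembed} giving $m\leq 16\cdot 4k+1=64k+1$, and the same final family $\binom{[m]}{\leq 2}\cup\mathcal{T}\cup\{X_1,\ldots,X_k\}$. Your explicit check that every pair lies in at most four members and that any third element then appears in $0$ or $2$ of them is a welcome detail that the paper asserts without proof.
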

\begin{proof}
Construct a partial triple system of multiplicity 2 by taking, for all $0\leq i \leq k-1$, the triples $\{4i+1,4i+2,4i+3\}$, $\{4i+1,4i+2,4i+4\}$, $\{4i+1,4i+3,4i+4\}$ and $\{4i+2,4i+3,4i+4\}$. These $4k$ triples form a partial triple system of multiplicity 2 and order $4k$. By Theorem~\ref{thm:tripleembed} these triples are contained in some triple system $\F$ with $\lambda = 2$ and order $m$, with $m\leq 16\cdot 4k+1$. Adding to $\F$ all sets of size two or less and the 4-sets $\{4i+1,4i+2,4i+3,4i+4\}$ for all $0\leq i \leq k-1$ we obtain a family of the correct size, which does not contain the forbidden configuration given by the matrix $A$.
\end{proof}

We observe that the bound in Proposition~\ref{prop:tripleembed} is not sharp, in particular any two of the added 4-sets could be allowed to intersect in one element. Indeed, if four sets are witnesses for the configuration $A$ then any two of the four sets intersect in at least two elements. This leads us to an even stronger bound, giving an improvement in the leading coefficient.
\begin{proposition}
For every sufficiently large $m$ with $m\equiv 1,4\text{ (mod 12)}$ we have
$$\text{forb}\left(m,\begin{bmatrix}
  1 & 1 & 1 & 1\\
1 & 1 & 1 & 1\\
1 & 0 & 0 & 0
\end{bmatrix}\right) \geq \frac{11}{6}\binom{m}{2}+\binom{m}{1}+\binom{m}{0}.$$
\end{proposition}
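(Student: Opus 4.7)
The plan is to exploit the existence of a Steiner system $S(2,4,m)$, which by Hanani's theorem exists for every $m\equiv 1,4\pmod{12}$. Write $\mathcal{D}$ for its block set, so $|\mathcal{D}|=\binom{m}{2}/6$ and every pair sits in a unique block $B\in\mathcal{D}$. I take
$$\F \;=\; \binom{[m]}{\leq 2}\;\cup\;\left\{T\in\binom{[m]}{3}: T\subseteq B\text{ for some }B\in\mathcal{D}\right\}\;\cup\;\mathcal{D}.$$
The key observation is that the triple-collection above is already a $2$-fold triple system: since every pair $\{x,y\}$ sits in a unique block $\{x,y,u,v\}\in\mathcal{D}$, it appears in precisely the two triples $\{x,y,u\}$ and $\{x,y,v\}$. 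Counting gives
$$|\F|\;=\;\binom{m}{0}+\binom{m}{1}+\binom{m}{2}+\tfrac{2}{3}\binom{m}{2}+\tfrac{1}{6}\binom{m}{2}\;=\;\tfrac{11}{6}\binom{m}{2}+m+1,$$
matching the claimed bound.

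Next I verify that $\F$ avoids the forbidden configuration, i.e.\ there is no quadruple $F_1,F_2,F_3,F_4\in\F$ and rows $x,y,z$ with $\{x,y\}\subseteq F_i$ for all $i$ while $z$ lies in exactly one of the $F_i$. The crux is that for any pair $\{x,y\}$ the family $\F$ contains exactly four supersets of $\{x,y\}$: the pair itself, the two triples $\{x,y,u\},\{x,y,v\}$, and the block $B=\{x,y,u,v\}$. Here the Steiner property $\lambda=1$ is essential, since distinct blocks of $\mathcal{D}$ meet in at most one point, so no second block (and hence no additional triple from a different block) contains $\{x,y\}$. The only potential bad quadruple is therefore $\bigl(\{x,y\},\{x,y,u\},\{x,y,v\},B\bigr)$, and every row outside $\{x,y\}$ has column-sum either $0$ (rows $\notin\{u,v\}$) or $2$ (rows $u$ and $v$); in particular no row realizes the required $[1,0,0,0]^T$-pattern.

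The real difficulty is not in the verification but in guessing the construction. Once one observes that the desired improvement over Proposition~\ref{prop:tripleembed} is exactly $\frac{1}{6}\binom{m}{2}$ --- the number of blocks of an $S(2,4,m)$ --- and then realizes that the two triples containing each pair should be chosen as the two $3$-subsets of the unique $4$-block of that pair rather than picked freely, the rest of the argument is essentially inspection. The residue restriction $m\equiv 1,4\pmod{12}$ comes directly from Hanani's existence theorem; other residues should be reachable by a partial-design-embedding argument analogous to Proposition~\ref{prop:tripleembed}, at the cost of a lower-order error term.
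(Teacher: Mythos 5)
Your proof is correct and follows essentially the same route as the paper: take a Steiner system $S(2,4,m)$ (the paper invokes Wilson's asymptotic existence theorem, hence the ``sufficiently large''), add all $3$-subsets of its blocks to form a $2$-fold triple system, and include the blocks themselves together with $\binom{[m]}{\leq 2}$. Your verification that each pair has exactly four supersets in $\F$, every other element lying in $0$ or $2$ of them, is a welcome detail that the paper leaves implicit.
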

\begin{proof}
Given a sufficiently large integer $m\equiv 1,4 \text{ (mod 12)}$, by a theorem of Wilson~\cite{wilson} there exists a family $\mathcal{S}$ of 4-sets in $[m]$ such that any pair $x,y\in [m]$ of distinct elements are covered by precisely one set in $\mathcal{S}$. Note that $|\mathcal{S}|=\binom{m}{2}/6$. Construct a family $\F'$ of 3-sets by including, for each set $S\in\mathcal{S}$, all four 3-subsets of $S$. Note that since any two sets in $\mathcal{S}$ intersect in at most one element, we have $|\F'|=\frac23\binom{m}{2}$ and in fact $\F'$ is a triple system of order $m$ and multiplicity two. Then the family $$\F=\binom{[m]}{\leq 2}\cup \F'\cup \mathcal{S}$$
does not contain the forbidden configuration and has the correct size.
\end{proof}

~

\subsection{A Hoffman-type eigenvalue bound on regular set systems}\label{subsec:hoffman}

We say that a family $\F\subset 2^{[n]}$ is \emph{$s$-subset-regular} if every set of size $s$ lies in the same number of elements of $\F$. Ihringer and Kupavskii~\cite{ihrkup} proved the following Hoffman-type eigenvalue upper bound on such regular families:
\begin{theorem}[Ihringer--Kupavskii~\cite{ihrkup}\label{thm:ihrkup}]
Fix odd $s\geq 1$. An $s$-subset-regular $k$-uniform intersecting family $\F$ on $[n]$ satisfies
$$|\F|\leq \frac{\binom{n}{k}}{1+\frac{\binom{n-k}{k}}{\binom{n-k-s-2}{k-s-2}}}.$$
\end{theorem}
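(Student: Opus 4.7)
The plan is to apply the Hoffman ratio bound to the Kneser graph $K(n,k)$, in which an intersecting family is exactly an independent set. The adjacency matrix $A$ of $K(n,k)$ lies in the Bose--Mesner algebra of the Johnson scheme, so $\mathbb{R}^{\binom{[n]}{k}}$ decomposes into orthogonal eigenspaces $V_0, V_1, \ldots, V_k$ with $V_0 = \mathbb{R}\mathbf{1}$, and $A$ acts on $V_j$ by the scalar $\lambda_j = (-1)^j \binom{n-k-j}{k-j}$. The unrestricted Hoffman bound, using $\tau = \lambda_1 = -\binom{n-k-1}{k-1}$, would only recover the Erd\H{o}s--Ko--Rado bound; the point is that $s$-subset-regularity restricts the relevant $\tau$ to a strictly smaller subspace.

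The key link is the classical identity $\ker(W_{s,k}) = V_{s+1} \oplus \cdots \oplus V_k$, where $W_{s,k}$ is the $\binom{n}{s} \times \binom{n}{k}$ containment matrix of $s$-subsets inside $k$-subsets. Setting $\mu := |\F|/\binom{n}{k}$ and $v := \mathbf{1}_\F - \mu \mathbf{1}$, the hypothesis that $\F$ is $s$-subset-regular says $W_{s,k}\mathbf{1}_\F$ is a constant vector, and a double count pins that constant down to $\mu \binom{n-s}{k-s}$, so $W_{s,k} v = 0$ and hence $v \in V_{s+1} \oplus \cdots \oplus V_k$. Expanding $0 = \mathbf{1}_\F^T A \mathbf{1}_\F$ (zero because $\F$ is intersecting) and using that the cross term $\mu\,\mathbf{1}^T A v$ vanishes since $A\mathbf{1} \parallel \mathbf{1} \perp v$ gives
\[
0 = \mu^2 \binom{n-k}{k}\binom{n}{k} + v^T A v \;\geq\; \mu^2 \binom{n-k}{k}\binom{n}{k} + \tau \|v\|^2,
\]
where $\tau$ is now the smallest eigenvalue of $A$ restricted to $V_{s+1} \oplus \cdots \oplus V_k$. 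Since $s$ is odd, $\lambda_{s+1}$ is positive while $\lambda_{s+2} = -\binom{n-k-s-2}{k-s-2}$ is negative; a short monotonicity check shows $|\lambda_{s+2}| > |\lambda_{s+4}| > \cdots$ (for $n > 2k$), so $\tau = -\binom{n-k-s-2}{k-s-2}$.

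To finish I would substitute $\|v\|^2 = |\F|\bigl(1 - |\F|/\binom{n}{k}\bigr)$ (Pythagoras on $\mathbf{1}_\F = \mu\mathbf{1} + v$) into the displayed inequality, divide through by $|\F|$, and rearrange; the algebra collapses exactly into the stated bound. The main obstacle is really background rather than technique: one must know (or rederive) the eigenvalues $\lambda_j$ of the Kneser graph on the Johnson scheme, the identification $\ker(W_{s,k}) = V_{s+1} \oplus \cdots \oplus V_k$, and one must use the parity hypothesis on $s$ at exactly the right moment, namely to rule out the positive eigenvalue on $V_{s+1}$ and single out $\lambda_{s+2}$ as the smallest restricted eigenvalue. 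Given those ingredients, the bound is a clean invocation of the standard Hoffman/ratio argument.
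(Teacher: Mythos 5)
This theorem is quoted from Ihringer--Kupavskii and not proved in the paper, so there is no internal proof to compare against; your argument is, however, a correct reconstruction of the proof in the cited source: the ratio bound applied with $v=\mathbf{1}_{\mathcal F}-\mu\mathbf{1}\in\ker W_{s,k}=V_{s+1}\oplus\cdots\oplus V_k$, with the parity of $s$ making $\lambda_{s+1}\ge 0$ and singling out $\lambda_{s+2}=-\binom{n-k-s-2}{k-s-2}$ as the least eigenvalue on that subspace, and the final algebra does collapse to the stated bound. The only points worth making explicit are the standing hypotheses $n\ge 2k$ (needed both for the monotonicity $|\lambda_j|\ge|\lambda_{j+1}|$ that identifies $\lambda_{s+2}$ as extremal and for Gottlieb's theorem giving $\operatorname{rank} W_{s,k}=\binom{n}{s}$, hence $\ker W_{s,k}=V_{s+1}\oplus\cdots\oplus V_k$) and $k\ge s+2$ (so that the denominator binomial is nonzero and the bound is meaningful).
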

They proved~\cite{ihrkup} that equality in Theorem~\ref{thm:ihrkup} is achieved with $(n,k,s)=(7,3,1)$ and $(9,4,1)$. They asked whether there are other values of the parameters with $n\geq 2k+1$ for which Theorem~\ref{thm:ihrkup} is tight. We will show that the answer is yes, by constructing such a family with parameters $(11,5,3)$.

We phrase this problem as an IP as follows. We fix some $n,k,s$. For each $A\in \binom{[n]}{k}$ we introduce a 0-1 variable $x_A$. We force $\F$ to be intersecting as before, by adding for each disjoint pair of sets $A,B$ a constraint $x_A+x_B\leq 1$. To ensure that $\F$ is $s$-subset-regular for each $S\subset [n]$ we add a constraint
$$\sum_{[s]\subset A\in\binom{[n]}{k}} x_A - \sum_{S\subset B\in\binom{[n]}{k}}x_B = 0.$$ Solving this IP directly gives the following construction for $(n,k,s)=(11,5,3)$ in about 30 seconds:

\begin{equation*}
\begin{split}
    \F = &\{\{1,2,3,4,11\}, \{1,2,3,5,6\}, \{1,2,3,7,8\}, \{1,2,3,9,10\}, \{1,2,4,5,10\}, \\ &\{1,2,4,6,7\}, \{1,2,4,8,9\}, \{1,2,5,7,9\}, \{1,2,5,8,11\}, \{1,2,6,8,10\}, \\ &\{1,2,6,9,11\}, \{1,2,7,10,11\}, \{1,3,4,5,7\}, \{1,3,4,6,9\}, \{1,3,4,8,10\}, \\
\end{split}
\end{equation*}
\begin{equation*}
\begin{split}
   ~ ~ ~ ~ ~ ~ ~ ~ ~ &\{1,3,5,8,9\}, \{1,3,5,10,11\}, \{1,3,6,7,10\}, \{1,3,6,8,11\}, \{1,3,7,9,11\}, \\ &\{1,4,5,6,8\}, \{1,4,5,9,11\}, \{1,4,6,10,11\}, \{1,4,7,8,11\}, \{1,4,7,9,10\}, \\ 
\end{split}
\end{equation*}
\begin{equation*}
\begin{split}
   ~ ~ ~ ~ ~ ~ ~ ~  &\{1,5,6,7,11\}, \{1,5,6,9,10\}, \{1,5,7,8,10\}, \{1,6,7,8,9\}, \{1,8,9,10,11\}, \\ &\{2,3,4,5,8\}, \{2,3,4,6,10\}, \{2,3,4,7,9\}, \{2,3,5,7,10\}, \{2,3,5,9,11\}, \\ &\{2,3,6,7,11\}, \{2,3,6,8,9\}, \{2,3,8,10,11\}, \{2,4,5,6,9\}, \{2,4,5,7,11\},
\end{split}
\end{equation*}
\begin{equation*}
\begin{split}
   ~ ~ ~ ~ ~ ~ ~ ~ ~ ~ &   \{2,4,6,8,11\}, \{2,4,7,8,10\}, \{2,4,9,10,11\}, \{2,5,6,7,8\}, \{2,5,6,10,11\}, \\ &\{2,5,8,9,10\}, \{2,6,7,9,10\}, \{2,7,8,9,11\}, \{3,4,5,6,11\}, \{3,4,5,9,10\}, \\ &\{3,4,6,7,8\}, \{3,4,7,10,11\}, \{3,4,8,9,11\}, \{3,5,6,7,9\}, \{3,5,6,8,10\},  
\end{split}
\end{equation*}
\begin{equation*}
\begin{split}
   ~ ~ ~ ~ ~ ~ ~ ~ ~ ~ ~ &\{3,5,7,8,11\}, \{3,6,9,10,11\}, \{3,7,8,9,10\}, \{4,5,6,7,10\}, \{4,5,7,8,9\}, \\ &\{4,5,8,10,11\}, \{4,6,7,9,11\}, \{4,6,8,9,10\}, \{5,6,8,9,11\}, \{5,7,9,10,11\}, \\ &\{6,7,8,10,11\}\}
\end{split}
\end{equation*}

~

\subsection{The Kleitman matching problem}\label{subsec:kleitman}

Let $s\geq 3$ be an integer, and let $k(n,s)$ denote the maximum size of a family $\F\subset 2^{[n]}$ without $s$ pairwise disjoint members. Kleitman~\cite{kleitmanmat} determined $k(n,s)$ for $n\equiv 0$ or $-1$ (mod $s$), see Theorem~\ref{thm:kleitmanmat}. In the case $n\equiv -2$ (mod $s$), the value of $k(n,s)$ was determined by Quinn~\cite{quinn} if $s=3$ and by Frankl and Kupavskii~\cite{franklkupk1,franklkupk2} for all $s$. 

Recall that $k(n+l,s)\geq 2^l k(n,s)$. Indeed, if $\F\subset 2^{n}$ has no $s$ pairwise disjoint members, then neither does $\F' = \{F\subset [n+l]: F\cap [n]\in \F\}$. Kleitman showed~\cite{kleitmanmat} that $k(n,s)=2k(n-1,s)$ if $s$ divides $n$. Motivated by this, Frankl and Tokushige~\cite{frankltoku} made the following conjecture:
\begin{conjecture}[\label{conj:kleitmanmat}\cite{frankltoku}, p.~213]
Let $s\geq 4$. If $n\equiv 1 $ (mod $s$), then $$k(n,s)=4k(n-2,s).$$
\end{conjecture}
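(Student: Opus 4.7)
The plan is to phrase $k(n,s)$ as an IP in the usual way and search for a counterexample at small parameters, exactly as throughout the preceding sections. For each $A\subseteq[n]$ introduce a $0$-$1$ variable $x_A$; for every $s$-tuple $(A_1,\ldots,A_s)$ of pairwise disjoint subsets of $[n]$ add the linear constraint $x_{A_1}+\cdots+x_{A_s}\leq s-1$; and maximize $\sum_{A\subseteq[n]}x_A$. The optimum is exactly $k(n,s)$. Since the lower bound $k(n,s)\geq 4k(n-2,s)$ is automatic from the doubling remark preceding the conjecture, any pair $(n,s)$ on which the IP returns a value strictly greater than $4k(n-2,s)$ refutes Conjecture~\ref{conj:kleitmanmat}.

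The search would proceed over the smallest admissible pairs $(s,n)$ with $s\geq 4$ and $n\equiv 1\pmod{s}$, namely $(4,5),(5,6),(6,7),(7,8),(4,9),\ldots$, running the IP to obtain $k(n,s)$ and comparing against $4k(n-2,s)$. The value of $k(n-2,s)$ is either already known in closed form from Theorem~\ref{thm:kleitmanmat} (when $n-2\equiv 0$ or $-1\pmod{s}$) or can be produced by a much smaller instance of the same IP. As soon as the solver returns a family $\F$ with $|\F|>4k(n-2,s)$, we would record it explicitly as the counterexample and, as was done in Sections~\ref{subsec:anti}--\ref{subsec:kat}, try to read off a closed-form pattern from $\F$ and lift the refutation to an infinite family of parameters.

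The principal obstacle is scale: there are $2^n$ variables and up to $\Theta\bigl(\binom{2^n}{s}\bigr)$ pairwise-disjoint constraints, so even $n=9, s=4$ is non-trivial. Two reductions should keep the computation within reach of Gurobi. First, adding any superset of an existing member of $\F$ cannot create a new pairwise disjoint $s$-tuple, so we may restrict the search to up-closed families by imposing $x_A\leq x_B$ for every covering pair $A\subsetneq B$ with $|B|=|A|+1$; and for an up-closed $\F$ every pairwise disjoint $s$-tuple $(A_1,\ldots,A_s)$ can be enlarged to a set partition of $[n]$ by absorbing $[n]\setminus\bigcup_i A_i$ into $A_1$, so it suffices to post disjointness constraints only for the set partitions $[n]=B_1\sqcup\cdots\sqcup B_s$ into $s$ non-empty parts, a dramatic reduction in constraint count. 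Second, the natural $S_n$-symmetry can be broken by forcing $|\F(1)|\geq|\F(2)|\geq\cdots\geq|\F(n)|$ via a few extra linear inequalities. The hardest conceptual step is likely not the IP itself but turning an explicit IP counterexample at a single $(n,s)$ into a closed-form construction valid for every $n\equiv 1\pmod s$, since the conjecture asserts an equality across an entire residue class rather than at a single parameter.
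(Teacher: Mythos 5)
Your proposal is essentially the paper's approach: it formulates $k(n,s)$ as a $0$--$1$ IP with pairwise-disjointness constraints and searches the small admissible parameters, and the paper finds the counterexample exactly where your search would first succeed, at $(n,s)=(9,4)$, exhibiting $\mathcal{G}=\binom{[9]}{3}\cup\{A\in\binom{[9]}{2}:A\cap[2]\neq\emptyset\}$ so that $\mathcal{G}\cup\binom{[9]}{\geq 4}$ has $481=4k(7,4)+1$ members (the paper makes the computation feasible by heuristically fixing the layers outside $\binom{[9]}{2}\cup\binom{[9]}{3}$, whereas your up-set/partition reduction achieves the same effect rigorously). One small correction: your closing worry is unfounded --- a single pair with $k(n,s)>4k(n-2,s)$ already refutes the conjecture, since it asserts the equality for \emph{every} $n\equiv 1\pmod{s}$, so no closed-form lift to the whole residue class is required.
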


\begin{theorem}[\label{thm:kleitmanmat}Kleitman~\cite{kleitmanmat}]
Let $s\geq 2$ be an integer and $\F\subset 2^{[n]}$ a family without $s$ pairwise disjoint members. Then for $n=s(m+1)-\ell$ with $\ell\in [s]$ we have
$$|\F|\leq \frac{\ell - 1}{s}\binom{n}{m} + \sum_{t\geq m+1}\binom{n}{t},$$
and this is sharp for $\ell\in\{1,s\}$.
\end{theorem}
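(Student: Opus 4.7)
The plan is to prove the upper bound by shifting combined with induction on $n$, and to verify sharpness by exhibiting explicit extremal families.

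For sharpness, when $\ell=1$ the family $\F_*:=\{F\subseteq [n]:|F|\geq m+1\}$ has size $\sum_{t\geq m+1}\binom{n}{t}$ and contains no $s$ pairwise disjoint members, since any such collection would have total size at least $s(m+1)=n+1>n$. When $\ell=s$, so $n=sm$, augment $\F_*$ with all $m$-subsets of $[n]$ avoiding a fixed element, say $n$: any $s$ pairwise disjoint members of the resulting family must all have size exactly $m$ (as $sm=n$ caps their total size), hence must partition $[n]$, contradicting that none of them contains $n$. A direct calculation using $n=sm$ gives $\binom{n-1}{m}=\frac{s-1}{s}\binom{n}{m}$, so the family's size matches the bound.

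For the upper bound, first pass to the upward closure of $\F$, which preserves the no-$s$-disjoint property (since $s$ pairwise disjoint supersets dominate $s$ pairwise disjoint subsets) and only enlarges the family. Next apply the usual left-shift operators $S_{ij}$ for $i<j$, which preserve both $|\F|$ and the property and terminate in a shifted family; assume henceforth that $\F$ is shifted and upward-closed. Split $\F=\F_0\sqcup\F_1$ according to whether $n$ lies in the set, and set $\F_1':=\{F\setminus\{n\}:F\in\F_1\}\subseteq 2^{[n-1]}$. Upward-closedness forces $\F_0\subseteq\F_1'$, so we must bound $|\F_0|+|\F_1'|$ subject to the inherited condition: there are no $s-1$ pairwise disjoint sets in $\F_1'$ together with an additional set from $\F_0$ disjoint from them. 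Induct on $n$, noting that moving from $n$ to $n-1$ sends the residue class $(m,\ell)$ to $(m,\ell+1)$ for $\ell<s$ and to $(m-1,1)$ for $\ell=s$.

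The main obstacle will be that the condition inherited by $(\F_0,\F_1')$ is joint and does not reduce cleanly to two separate no-$s$-disjoint conditions, so a naive induction does not close. The standard remedy is either a careful double induction on $(n,s)$ tracking both families simultaneously, or the symmetric-chain-decomposition argument from Kleitman's original paper, which assigns each chain of $2^{[n]}$ a contribution bounded by the maximum number of members a no-$s$-disjoint family can select from it, and recovers the stated bound by summing these contributions across chains. Handling the intermediate residues $1<\ell<s$, for which neither sharpness case applies and the extra term $\frac{\ell-1}{s}\binom{n}{m}$ appears, requires particular care: one must track how the fractional mass on layer $m$ propagates under the $\ell\mapsto\ell+1$ shift of the induction and verify that equality along the recursion yields exactly the right coefficient $(\ell-1)/s$ on $\binom{n}{m}$.
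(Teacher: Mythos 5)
The paper does not prove this theorem---it is quoted from Kleitman's 1968 paper---so your attempt has to stand on its own. The sharpness half is fine: both extremal families are correct, and the identity $\binom{n-1}{m}=\frac{s-1}{s}\binom{n}{m}$ for $n=sm$ checks out. The upper bound, however, is not proved. You carry out the standard preprocessing (up-closure, compressions, splitting on the element $n$) and then state explicitly that the naive induction ``does not close,'' deferring to ``a careful double induction'' or ``the symmetric-chain-decomposition argument from Kleitman's original paper'' without executing either. That deferred step is the entire content of the theorem: it is exactly where the coefficient $\frac{\ell-1}{s}$ on $\binom{n}{m}$ and the trade-off between sets below level $m+1$ and missing sets above it have to be produced. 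As written this is a plan with the hard part labelled rather than carried out, so it is not a proof.

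Two smaller problems in the part you did write. First, the inherited condition on the pair $(\F_0,\F_1')$ is stated backwards: since $s$ pairwise disjoint members of $\F$ can include at most one set containing $n$, the forbidden configuration is $s-1$ pairwise disjoint members of $\F_0$ together with one member of $\F_1'$ disjoint from all of them (together with, a priori, $s$ pairwise disjoint members of $\F_0$, which is subsumed once $\F_0\subseteq\F_1'$), not ``$s-1$ pairwise disjoint sets in $\F_1'$ together with an additional set from $\F_0$.'' For $s\geq 3$ these are genuinely different conditions, and the error would propagate into any induction built on it. Second, the claim that passing to the upward closure preserves the no-$s$-disjoint property fails when $\emptyset\in\F$ (take $\F=\{\emptyset\}$ and $s=2$: the up-closure is all of $2^{[n]}$); you need to dispose of that case separately before replacing $\F$ by its up-set.
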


This gives $k(7,4)=120$ and hence in order to disprove Conjecture~\ref{conj:kleitmanmat} our goal is to show $k(9,4)\geq 481$. One can formulate this problem as an IP as follows. As before, we introduce a 0-1 valued indicator variable for every $A\subset [n]$. For each quadruple of pairwise disjoint sets $A,B,C,D$ we add the constraint $x_A+x_B+x_C+x_D\leq 3$. Our goal is then simply to maximize the sum of the variables. 

To speed up the solution of this IP it helps if one makes the heuristic, though certainly unjustified, assumption that $x_A=1$ whenever $|A|\geq 4$ and $x_A=0$ whenever $|A|\leq 1$. Indeed, intuitively it makes sense to include 'large' sets, and so far our family does not even contain three disjoint sets. This restricts the search space to the considerably smaller world $\binom{[n]}{2}\cup \binom{[n]}{3}$.

Note that $480 = 2^9-32 = \binom{9}{\geq 4} + 98$. The LP solver finds a family $\G$ of size $99 $  in $\binom{[9]}{2}\cup \binom{[9]}{3}$ without four pairwise disjoint sets. This gives a counterexample to Conjecture~\ref{conj:kleitmanmat}, as then $\G \cup \binom{[9]}{\geq 4}$ does not contain four pairwise disjoint sets either, and $\left|\G \cup \binom{[9]}{\geq 4}\right|= 4k(7,2) + 1$. The search takes around 2 seconds:

\begin{equation*}
\begin{split}
    \mathcal{G} = \binom{[9]}{3}\cup\left\{A\in\binom{[9]}{2}: |A\cap [2]|\geq 1\right\}.
\end{split}
\end{equation*}

\subsection{Rainbow matchings}\label{subsec:aha}

Aharoni and Howard~\cite{aharhow} considered problems related to rainbow matchings in hypergraphs. Given a collection $\F=(\F_1,\F_2\ldots,\F_k)$ of hypergraphs, a choice of disjoint edges, one from each $\F_i$, is called a \emph{rainbow matching} for $\F$. They made the following conjecture:
\begin{conjecture}[\cite{aharhow}\label{conj:aharhow}]
Let $d>1$, and let $F_1,\ldots,F_k$ be bipartite graphs on the same ground set, satisfying $\Delta(F_i)\leq d$ and $|F_i| > (k-1) d $. Then the system $F_1,\ldots, F_k$ has a rainbow matching. 
\end{conjecture}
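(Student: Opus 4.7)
The plan is to search computationally for a small counterexample to Conjecture~\ref{conj:aharhow}, following the integer programming template of Section~\ref{sec:linbas}. Fix small parameters (for instance $d \in \{2,3\}$, $k \in \{3,4,5\}$) and a bipartite ground set with sides $A$ and $B$ of $5$--$8$ vertices each. For every potential edge $e$ between $A$ and $B$ and every $i \in [k]$, introduce a $0$--$1$ indicator variable $x_{e,i}$ recording whether $e$ belongs to $F_i$. The two hypotheses translate cleanly: for each vertex $v$ and each $i$ add $\sum_{e \ni v} x_{e,i} \leq d$, and for each $i$ add $\sum_e x_{e,i} \geq (k-1)d + 1$. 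The negation of the conclusion becomes: for every ordered $k$-tuple $(e_1,\ldots,e_k)$ of pairwise disjoint edges in $A \times B$, add the inequality
\[
x_{e_1,1} + x_{e_2,2} + \cdots + x_{e_k,k} \leq k-1,
\]
which forbids choosing $e_i \in F_i$ for all $i$ simultaneously. Any feasible integer solution is then a counterexample.

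The main obstacle is the combinatorial explosion in the last family of constraints, which grows roughly like $(k!)^2 \binom{n}{k}^2$ for ground sets of size $n$ per side, and becomes prohibitive long before the solver reaches interesting parameters. I would attack this with two complementary tactics. First, aggressive symmetry breaking: fix $F_1$ up to isomorphism (for instance as a blow-up of a small bipartite pattern, a long even cycle, or a disjoint union of short paths), fix a canonical labelling of its vertices, and let the solver complete only $F_2,\ldots,F_k$; it is also worth separately exploring the extreme symmetric case $F_1 = \cdots = F_k$, which reduces to the classical question of when a bipartite graph with max degree $d$ and more than $(k-1)d$ edges must contain a matching of size $k$. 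Second, a lazy-cut loop in the style of branch-and-cut: drop the rainbow-matching inequalities initially, solve the relaxed IP, and feed the resulting candidate $(F_1,\ldots,F_k)$ into a secondary IP that searches for a rainbow matching; if one is found, add only that single offending cut to the master IP and resolve, otherwise declare the candidate a counterexample. If one exists at modest parameters, the solver should recover it, and following the pattern of the earlier subsections, one should then read off an infinite family of counterexamples parametrised by $d$ and $k$ whose correctness can be verified by hand.
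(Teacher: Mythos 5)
Your IP template (indicator variables $x_{e,i}$, per-vertex degree constraints, size constraints, and one inequality $\sum_i x_{e_i,i}\leq k-1$ per disjoint $k$-tuple) is essentially the paper's encoding, but there is one modelling decision that breaks the approach: you fix a \emph{single} bipartition $A\cup B$ shared by all of $F_1,\ldots,F_k$. The conjecture only requires the graphs to live on the same ground set, and the paper (see its footnote) interprets this as the same \emph{vertex} set, allowing each $F_i$ its own bipartition $(L_i,R_i)$ of $[n]$. This freedom is essential to the counterexample it finds at $(n,k,d)=(6,3,2)$: there $L_1=\{1,2,3\}$, $L_2=\{2,3,4\}$, $L_3=\{3,4,5\}$, and the union $F_1\cup F_2\cup F_3$ contains the triangle on $\{1,3,4\}$ (edge $34$ from $F_1$, edge $14$ from $F_2$, edge $13$ from $F_3$), so no common bipartition of the vertex set is compatible with all three graphs. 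Your search space therefore excludes this counterexample outright, and it is not established that the common-bipartition variant of the conjecture fails at all, let alone at parameters your solver can reach. The fix is cheap: keep the vertex set $[n]$ fixed, choose (or enumerate over a few choices of) a separate bipartition $(L_i,R_i)$ for each $i$, and define the variables $x^{(i)}_{ab}$ only for $a\in L_i$, $b\in R_i$, exactly as the paper does.

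A second, smaller point: your proposed ``extreme symmetric case'' $F_1=\cdots=F_k=F$ is provably a dead end and not worth solver time. If $F$ is bipartite with $\Delta(F)\leq d$ and no matching of size $k$, then by K\H{o}nig's theorem it has a vertex cover of size at most $k-1$, hence at most $(k-1)d$ edges; so any $F$ with $|F|>(k-1)d$ already contains a $k$-matching, which is in particular a rainbow matching for $k$ identical copies. This is consistent with the observation above: any counterexample must exploit genuinely different graphs, and (at least at the scale found in the paper) genuinely different bipartitions. Aside from these issues your computational tactics (symmetry breaking, lazy separation of the rainbow-matching cuts, maximizing one $|F_i|$ versus pure feasibility) are reasonable and compatible with the paper's approach, which reports that the corrected IP at $n=6$, $k=3$, $d=2$ solves in under a second.
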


To disprove Conjecture~\ref{conj:aharhow} we need to find a collection of bipartite graphs satisfying the bounds above, without a rainbow matching. We phrase this problem as an IP as follows. First, we fix parameters $n,k,d$. Next we fix the partite sets $(L_i,R_i)$ for each of the $k$ bipartite graphs\footnote{We are fairly certain that the phrase ``same ground set'' in Conjecture~\ref{conj:aharhow} only means same vertex set, given the original context in~\cite{aharhow}.}, so that $L_i\dot\cup R_i=[n]$ for each $1\leq i \leq k$. We introduce indicator variables $x_{ab}^{(i)}$ for each $1\leq i \leq k$ and pair of vertices $a\in L_i, b\in R_i$. The maximum degree condition is then a collection of $nk$ simple linear constraints, one for each vertex and each $1\leq i \leq k$. To ensure the system does not have a rainbow matching, for all $k$-tuple of disjoint edges $(a_i,b_i)\in (L_i,R_i)$ for $1\leq i \leq k$, we add a constraint $\sum_{i=1}^k x_{a_i,b_i}^{(i)}\leq k-1$. For the sizes of the graphs, we add linear constraints for all $2\leq i \leq k$ saying that $\sum_{a\in L_i,b\in R_i}x_{ab}^{(i)}\geq (k-1)d+1$. Our goal is then to maximize $\sum_{a\in L_1,b\in R_1}x_{ab}^{(1)}$ and hope that the value of this maximum is greater than $(k-1)d$.

Solving this IP with $n=6$, $k=3$, $d=2$, and partite set $L_1=\{1,2,3\}, L_2=\{2,3,4\}$ and $L_3=\{3,4,5\}$ gives the following counterexample to Conjecture~\ref{conj:aharhow}:
$$F_1 = \{15,16,24,26,34,35\}, \quad F_2=\{14,25,26,35,36\}, \quad F_3=\{13,23,25,46,56\}.$$
Here we have $|F_2|=|F_3|=(k-1)d+1$ and $|F_1|=(k-1)d+2$.

~

\subsection{\label{subsec:jessica}A Tur\'an-type problem in multipartite graphs}

For graphs $G$ and $H$ denote by $\mathrm{ex}(G,H)$ the maximum number of edges in a subgraph of $G$ that contains no copy of $H$. For integers $k,r$ let $kK_r$ denote $k$ vertex-disjoint copies of $K_r$. De Silva et al considered~\cite{jessica} the problem of determining  $\mathrm{ex}(G,H)$ where $H=kK_r$ and $G$ is a complete multi-partite graph. They completely solved this problem when the number of partite sets in $G$ is equal to $r$:
\begin{theorem}[De Silva et al~\cite{jessica}\label{thm:jessica}]
For any integers $k\leq n_1\leq n_2\leq \ldots \leq n_r$,
$$\mathrm{ex}(K_{n_1,\ldots,n_r},kK_r)=\left(\sum_{1\leq i<j\leq r}n_in_j\right)-n_1n_2+n_2(k-1).$$
\end{theorem}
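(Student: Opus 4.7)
For the lower bound I would exhibit a direct construction. Fix a set $X=\{x_1,\ldots,x_{k-1}\}\subseteq V_1$ and delete from $K_{n_1,\ldots,n_r}$ every edge between $V_1$ and $V_2$ that is not incident to $X$. The resulting graph $G$ has precisely $\sum_{i<j} n_i n_j - n_1 n_2 + (k-1)n_2$ edges. Since any copy of $K_r$ uses one vertex from each part, it must contain an edge between $V_1$ and $V_2$; by construction such an edge is incident to some $x_j\in X$. Hence every copy of $K_r$ in $G$ meets the $(k-1)$-element set $X$, so at most $k-1$ vertex-disjoint copies of $K_r$ can fit.

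For the matching upper bound I would induct on the number of parts $r$. The base case $r=2$ is König's theorem: a bipartite subgraph of $K_{n_1,n_2}$ with no matching of size $k$ has a vertex cover of size at most $k-1$, and the maximum number of edges in such a graph is $(k-1)n_2$, matching the formula. For the inductive step, let $G\subseteq K_{n_1,\ldots,n_r}$ satisfy $e(G)>f:=\sum_{i<j}n_in_j-n_1n_2+(k-1)n_2$, and write $G_{12}$ for the bipartite subgraph of $G$ between $V_1$ and $V_2$. Since the edges of $G$ outside $G_{12}$ number at most $\sum_{i<j}n_in_j-n_1n_2$, we obtain $e(G_{12})>(k-1)n_2$, and by the base case $G_{12}$ contains a matching $M=\{u_1v_1,\ldots,u_kv_k\}$ with $u_j\in V_1$, $v_j\in V_2$. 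Contract each pair $\{u_j,v_j\}$ into a single new vertex $m_j$ to form an $(r-1)$-partite auxiliary graph $G^*$ on parts $(M,V_3,\ldots,V_r)$, declaring $m_jw$ to be an edge (for $w\in V_i$, $i\geq 3$) iff both $u_jw$ and $v_jw$ lie in $G$, and keeping the $G$-edges among $V_3,\ldots,V_r$ unchanged. A copy of $kK_{r-1}$ in $G^*$ then lifts to a copy of $kK_r$ in $G$, so it is enough to apply the induction hypothesis to $G^*\subseteq K_{k,n_3,\ldots,n_r}$ (note $k\leq n_3\leq\ldots\leq n_r$).

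To close the induction it remains to show that, for an appropriate choice of matching $M$, one has $e(G^*)>f_{r-1}(k,n_3,\ldots,n_r,k)$. Using the inclusion--exclusion bound
$$|N_G(u_j)\cap N_G(v_j)\cap V_i|\geq d_G(u_j,V_i)+d_G(v_j,V_i)-n_i,$$
one obtains a lower bound on $e(G^*)$ in terms of $e(G[V_3\cup\ldots\cup V_r])$ and of the total degree $S_M:=\sum_{v\in V(M)}d_G(v,V_3\cup\ldots\cup V_r)$. The hypothesis $e(G)>f$ translates into a simultaneous lower bound on $e(G_{12})$ and on the edges between $V_1\cup V_2$ and $V_3\cup\ldots\cup V_r$, so the residual deficits in the $V_{\geq 3}$ part and in the bipartite pieces between $V_1\cup V_2$ and $V_{\geq 3}$ are controlled.

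The main obstacle is this last step: small examples (e.g.~$r=3$, $n_1=n_2=n_3=3$, $k=2$, with $G_{12}$ nearly complete but with a few vertices in $V_1\cup V_2$ having low degree into $V_3$) show that an arbitrary $k$-matching in $G_{12}$ can fail to yield enough edges in $G^*$. One must choose $M$ to avoid the low-$d_G(\cdot,V_{\geq 3})$ vertices, for instance by taking $M$ to be a maximum-weight matching in $G_{12}$ with edge weights $w(uv)=d_G(u,V_{\geq 3})+d_G(v,V_{\geq 3})$, and then invoking a defect-Hall or averaging argument to lower bound $S_M$ using the two deficits above. Verifying that this choice of $M$ always delivers $e(G^*)>f_{r-1}(k,n_3,\ldots,n_r,k)$ is the technical heart of the proof.
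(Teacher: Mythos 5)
The paper does not actually prove this theorem: it is quoted verbatim from De Silva et al.\ as background for Section~\ref{subsec:jessica}, so there is no in-paper argument to compare yours against, and I have to judge the proposal on its own. Your lower bound is correct, and so are the base case (K\H{o}nig) and the observation that a $kK_{r-1}$ in the contracted graph $G^*$ lifts to a $kK_r$ in $G$.

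The inductive step, however, has a genuine gap, and it is worse than the failure mode you flag. You propose to fix the choice of $M$ so as to avoid vertices of $V_1\cup V_2$ with small degree into $V_{\geq 3}$, but the reduction also breaks when \emph{every} vertex of $V_1\cup V_2$ has full degree into $V_{\geq 3}$ and the entire deficit of $G$ sits inside $G[V_{\geq 3}]$. Concretely, take $r=4$, $n_1=\cdots=n_4=n$, $k=2$, and let $G$ be the complete $4$-partite graph except that only $n+1$ of the $n^2$ edges between $V_3$ and $V_4$ are retained. Then $e(G)=5n^2+n+1>f=5n^2+n$, and $G$ does contain $2K_4$ (the $n+1$ edges in $G[V_3\cup V_4]$ force a $2$-matching by K\H{o}nig, which extends freely through $V_1,V_2$). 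But for \emph{every} $2$-matching $M$ in $G_{12}$ the graph $G^*$ is complete between $M$ and $V_3\cup V_4$ and has exactly $e(G^*)=4n+(n+1)=5n+1$ edges, whereas the induction hypothesis for $2K_3$ in $K_{2,n,n}$ requires more than $n^2+3n$ edges. No weighting of the matching can help: the contraction throws away the $\Theta(n^2)$ edges incident to the $2(n-k)$ unmatched vertices of $V_1\cup V_2$, so the contracted instance can only tolerate a deficit of roughly $n_3$ inside $V_{\geq 3}$, while the original hypothesis $e(G)>f$ permits a deficit there of roughly $n_1n_2-(k-1)n_2$. The inductive framework therefore needs to be replaced rather than patched; the natural route (and, as far as I recall, the one taken in the source) is to induct on $k$ instead, with the $k=1$ multipartite Tur\'an result $\mathrm{ex}(K_{n_1,\ldots,n_r},K_r)=\sum_{i<j}n_in_j-n_1n_2$ as the base case and an argument that removes one copy of $K_r$ (chosen through low-degree vertices) while keeping the edge count above the threshold for $(k-1)K_r$.
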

De Silva et al~\cite{jessica} observed that the graph 
$$((n_1+n_2-k+1)K_1\cup K_{k-1,n_3})+K_4$$
does not contain $kK_3$, hence
$$\mathrm{ex}(K_{n_1,n_2,n_3,n_4},kK_3)\geq (n_1+n_2+n_3)n_4+(k-1)n_3.$$
They stated that it is not clear that this is an extremal construction. Using our methods we will show that their intuition was correct, and there exist better constructions.

We phrase the problem as an IP in the standard way. We fix some $n_1,n_2,n_3,n_4,k$ and for each edge $e$ of $K_{n_1,n_2,n_3,n_4}$ we introduce an indicator variable $x_e$. For every collection of $3k$ edges $e_1,\ldots,e_{3k}$ forming a $kK_3$ we include the constraint $\sum_{i=1}^{3k}e_i\leq 3k-1$. 

Solving this IP directly we find that already in the $n_1=n_2=n_3=n_4$ case there exist better constructions. Generalizing the constructions given by the IP solver, we have the following result:
\begin{proposition}
For all integers $k\leq n$, we have
$$\mathrm{ex}(K_{n,n,n,n},kK_3)\geq 4n^2 + (k-1)n.$$
\end{proposition}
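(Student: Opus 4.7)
The plan is to exhibit an explicit subgraph $G$ of $K_{n,n,n,n}$ with exactly $4n^2+(k-1)n$ edges and no $kK_3$. Let $A_1,A_2,A_3,A_4$ be the four partite sets, each of size $n$, and fix a subset $B\subset A_3$ with $|B|=k-1$. I take $G$ to consist of the following edges: all four complete bipartite graphs between $A_i$ and $A_j$ for $\{i,j\}\in\{\{1,3\},\{1,4\},\{2,3\},\{2,4\}\}$, together with the complete bipartite graph between $B$ and $A_4$. In other words, $G$ is obtained from $K_{n,n,n,n}$ by deleting all edges between $A_1$ and $A_2$, as well as all edges between $A_3\setminus B$ and $A_4$.

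The edge count is immediate: the four ``kept'' bipartite graphs contribute $4n^2$ edges, and $B$ to $A_4$ contributes $(k-1)n$ edges, for a total of $4n^2+(k-1)n$.

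It remains to verify that $G$ contains no $kK_3$. Every triangle in $K_{n_1,n_2,n_3,n_4}$ uses vertices in exactly three of the four parts, so there are only four possible ``part-types'' for a triangle, namely $\{1,2,3\},\{1,2,4\},\{1,3,4\},\{2,3,4\}$. Each of these triples contains either the pair $\{1,2\}$ or the pair $\{3,4\}$. A triangle of the first two types would require an edge between $A_1$ and $A_2$, of which $G$ has none; hence every triangle in $G$ uses the pair $\{3,4\}$, and in particular an edge between $A_3$ and $A_4$. By construction, every such edge has its $A_3$-endpoint in $B$. Consequently, every triangle in $G$ contains a vertex of $B$, and since $|B|=k-1$, no collection of $k$ pairwise vertex-disjoint triangles can exist.

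This establishes $\mathrm{ex}(K_{n,n,n,n},kK_3)\geq 4n^2+(k-1)n$. There is essentially no obstacle here: the only minor subtlety is recognizing that the pigeonhole ``each triple of parts contains $\{1,2\}$ or $\{3,4\}$'' reduces the triangle analysis to a single bipartite case, after which deleting everything between $A_3\setminus B$ and $A_4$ caps the matching number of triangles by $|B|=k-1$.
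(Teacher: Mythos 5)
Your construction is identical to the paper's: delete all edges between two of the parts and all edges between the other two, then restore a $K_{k-1,n}$ between the latter pair. The paper states the construction without verifying the absence of $kK_3$; your verification (every triangle must use a $\{3,4\}$-edge and hence a vertex of $B$) is correct, so the proposal is right and takes essentially the same approach.
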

\begin{proof}
Let the four partite sets of size $n$ be $A,B,C,D$. Remove all $2n^2$ edges between the pairs $A-B$ and between $C-D$. Between $C$ and $D$ add a copy of $K_{k-1,n}$.
\end{proof}

\section{Concluding remarks}
In this paper we presented a general method that can be used to quickly check whether a conjecture has small counterexamples. Nothing about the method itself is new. We hope to have convinced the reader of the usefulness and versatility of this technique in combinatorics with the various examples in Section~\ref{sec:linbas} and the number of counterexamples to open conjectures in Section~\ref{sec:results}. In practice, the main advantage of writing linear programs is the time saved -- small counterexamples are always found eventually, but it is better to find them in a few minutes rather than a few weeks.

\textbf{Acknowledgement:} The author is indebted to Bernard Lidick\'y who introduced him to linear programming.

\bibliographystyle{amsplain}

\bibliography{mybib}

\providecommand{\bysame}{\leavevmode\hbox to3em{\hrulefill}\thinspace}
\providecommand{\MR}{\relax\ifhmode\unskip\space\fi MR }
\providecommand{\MRhref}[2]{%
  \href{http://www.ams.org/mathscinet-getitem?mr=#1}{#2}
}
\providecommand{\href}[2]{#2}
\begin{thebibliography}{10}

\bibitem{abbott}
HL~Abbott and D~Hanson, \emph{A problem of {S}chur and its generalizations},
  Acta Arithmetica \textbf{20} (1972), no.~2, 175--187.

\bibitem{aharhow}
Ron Aharoni and David Howard, \emph{A rainbow $r$-partite version of the
  {E}rd{\H{o}}s--{K}o--{R}ado theorem}, Combinatorics, Probability and
  Computing \textbf{26} (2017), no.~3, 321--337.

\bibitem{aignerduffuskleitman}
Martin Aigner, Dwight Duffus, and Daniel~J Kleitman, \emph{Partitioning a power
  set into union-free classes}, Discrete Mathematics \textbf{88} (1991),
  113--119.

\bibitem{aignergrieser}
Martin Aigner and D~Grieser, \emph{Hook-free colorings and a problem of
  {H}anson}, Combinatorica \textbf{8} (1988), no.~2, 143--148.

\bibitem{alonbohman}
Noga Alon, Tom Bohman, Ron Holzman, and Daniel~J Kleitman, \emph{On partitions
  of discrete boxes}, Discrete mathematics \textbf{257} (2002), no.~2-3,
  255--258.

\bibitem{anstee}
Richard Anstee, \emph{A survey of forbidden configuration results}, the
  electronic journal of combinatorics \textbf{1000} (2013), 20--29.

\bibitem{bolllead}
B{\'e}la Bollob{\'a}s and Imre Leader, \emph{Isoperimetric problems for
  $r$-sets}, Combinatorics, Probability and Computing \textbf{13} (2004),
  no.~2, 277--279.

\bibitem{boxes}
Matija Bucic, Bernard Lidick\'y, Jason Long, and Adam~Zsolt Wagner,
  \emph{Partition problems in high dimensional boxes}, arXiv preprint
  arXiv:1805.11278 (2018).

\bibitem{designhandbook}
Charles~J Colbourn, \emph{{CRC} handbook of combinatorial designs}, CRC press,
  2010.

\bibitem{colbournembed}
CJ~Colbourn, RC~Hamm, CC~Lindner, and CA~Rodger, \emph{Embedding partial graph
  designs, block designs, and triple systems with $\lambda> 1$}, Canadian
  mathematical bulletin \textbf{29} (1986), no.~4, 385--391.

\bibitem{jessica}
Jessica De~Silva, Kristin Heysse, Adam Kapilow, Anna Schenfisch, and Michael
  Young, \emph{Tur{\'a}n numbers of vertex-disjoint cliques in r-partite
  graphs}, Discrete Mathematics \textbf{341} (2018), no.~2, 492--496.

\bibitem{erdoskorado}
P~Erd{\H{o}}s, \emph{Intersection theorems for systems op finite sets}, Quart.
  J. Math. Oxford Ser.(2) \textbf{12} (1961), 313--320.

\bibitem{erdosshelah}
P~Erd{\H{o}}s and Saharon Shelah, \emph{On problems of {M}oser and {H}anson},
  Graph theory and applications, Springer, 1972, pp.~75--79.

\bibitem{erdossperner}
Paul Erd{\H{o}}s, \emph{On a lemma of {L}ittlewood and {O}fford}, Bulletin of
  the American Mathematical Society \textbf{51} (1945), no.~12, 898--902.

\bibitem{falgas}
Victor Falgas-Ravry, \emph{Sperner's problem for $g$-independent families},
  Combinatorics, Probability and Computing \textbf{24} (2015), no.~3, 528--550.

\bibitem{frankltoku}
P.~Frankl and N.~Tokushige, \emph{Extremal problems for finite sets}, Student
  mathematical library, American Mathematical Society, 2018.

\bibitem{franklbip}
Peter Frankl, \emph{An {E}rd{\H{o}}s--{K}o--{R}ado theorem for direct
  products}, European Journal of Combinatorics \textbf{17} (1996), no.~8,
  727--730.

\bibitem{frankldiam}
\bysame, \emph{Antichains of fixed diameter}, Moscow Journal of Combinatorics
  and Number Theory \textbf{7} (2017), N3.

\bibitem{franklhanhuangzhao}
Peter Frankl, Jie Han, Hao Huang, and Yi~Zhao, \emph{A degree version of the
  {Hilton}--{Milner} theorem}, Journal of Combinatorial Theory, Series A
  \textbf{155} (2018), 493--502.

\bibitem{franklkupk1}
Peter Frankl and Andrey Kupavskii, \emph{Families with no $s$ pairwise disjoint
  sets}, Journal of the London Mathematical Society \textbf{95} (2017), no.~3,
  875--894.

\bibitem{franklkupk2}
\bysame, \emph{Families of sets with no matchings of sizes 3 and 4}, European
  Journal of Combinatorics \textbf{75} (2019), 123--135.

\bibitem{gurobi}
LLC Gurobi~Optimization, \emph{Gurobi optimizer reference manual}, 2018.

\bibitem{hiltonmilner}
Anthony~JW Hilton and Eric~C Milner, \emph{Some intersection theorems for
  systems of finite sets}, The Quarterly Journal of Mathematics \textbf{18}
  (1967), no.~1, 369--384.

\bibitem{huang}
Hao Huang, \emph{Two extremal problems on intersecting families}, European
  Journal of Combinatorics \textbf{76} (2019), 1--9.

\bibitem{ihrkup}
Ferdinand Ihringer and Andrey Kupavskii, \emph{Regular intersecting families},
  arXiv preprint arXiv:1709.10462 (2017).

\bibitem{katona}
Gyula~OH Katona, \emph{A general 2-part {E}rd{\H{o}}s-{K}o-{R}ado theorem},
  Opuscula Mathematica \textbf{37} (2017), no.~4, 577--588.

\bibitem{kirkman}
Thomas~P Kirkman, \emph{On a problem in combinations}, Cambridge and Dublin
  Mathematical Journal \textbf{2} (1847), 191--204.

\bibitem{kleitmandiam}
Daniel~J Kleitman, \emph{On a combinatorial conjecture of {E}rd{\"o}s}, Journal
  of Combinatorial Theory \textbf{1} (1966), no.~2, 209--214.

\bibitem{kleitmanmat}
\bysame, \emph{Maximal number of subsets of a finite set no $k$ of which are
  pairwise disjoint}, Journal of Combinatorial Theory \textbf{5} (1968), no.~2,
  157--163.

\bibitem{kleitmanunion}
\bysame, \emph{Extremal properties of collections of subsets containing no two
  sets and their union}, Journal of Combinatorial Theory, Series A \textbf{20}
  (1976), no.~3, 390--392.

\bibitem{yuzvinsky}
\bysame, \emph{On a problem of {Yuzvinsky} on separating the $n$-cube},
  Discrete mathematics \textbf{60} (1986), 207--213.

\bibitem{kupadiv}
Andrey Kupavskii, \emph{Diversity of uniform intersecting families}, European
  Journal of Combinatorics \textbf{74} (2018), 39--47.

\bibitem{leadermili}
Imre Leader, Luka Mili{\'c}evi{\'c}, and Ta~Sheng Tan, \emph{Decomposing the
  complete $r$-graph}, Journal of Combinatorial Theory, Series A \textbf{154}
  (2018), 21--31.

\bibitem{lempal}
Nathan Lemons and Cory Palmer, \emph{The unbalance of set systems}, Graphs and
  Combinatorics \textbf{24} (2008), no.~4, 361--365.

\bibitem{quinn}
F.~Quinn, \emph{Ph{D} thesis},  (1986).

\bibitem{sperner}
Emanuel Sperner, \emph{Ein satz {\"u}ber untermengen einer endlichen menge},
  Mathematische Zeitschrift \textbf{27} (1928), no.~1, 544--548.

\bibitem{wilson}
Richard~M Wilson, \emph{An existence theory for pairwise balanced designs, iii:
  Proof of the existence conjectures}, Journal of Combinatorial Theory, Series
  A \textbf{18} (1975), no.~1, 71--79.

\end{thebibliography}

\newpage

\appendix

\section{Counterexample to Conjecture~\ref{conj:huang2}}

For $n=10$ we have a counterexample to Conjecture~\ref{conj:huang2} of diversity 197. It is given by taking all supersets of elements of the following family $\F_{10}$: 

\begin{tiny}
\begin{equation*}
    \begin{split}
      \F_{10} =\{&\{1,2,6,7\}, \{1,3,6,10\}, \{1,3,7,9\}, \{1,4,5,8\}, \{1,4,6,8\}, \{1,4,7,9\}, \{1,5,8,10\}, \{1,5,9,10\}, \\ &\{2,3,4,5\}, \{2,3,4,10\}, \{2,3,8,9\}, \{2,4,7,10\}, \{2,5,6,9\}, \{2,6,8,9\}, \{2,7,8,10\}, \{3,4,6,10\}, \\ &\{3,5,6,7\}, \{3,5,7,8\}, \{4,6,8,9\}, \{5,6,9,10\}, \{1,2,3,4,5\}, \{1,2,3,4,9\}, \{1,2,3,4,10\}, \{1,2,3,5,8\}, \\ &\{1,2,3,6,7\}, \{1,2,3,6,8\}, \{1,2,3,6,10\}, \{1,2,3,7,9\}, \{1,2,3,8,9\}, \{1,2,3,8,10\}, \{1,2,4,5,7\}, \\ &\{1,2,4,5,8\}, \{1,2,4,5,9\}, \{1,2,4,5,10\}, \{1,2,4,6,7\}, \{1,2,4,6,8\}, \{1,2,4,7,9\}, \{1,2,4,7,10\}, \\ &\{1,2,5,6,7\}, \{1,2,5,6,9\}, \{1,2,5,8,10\}, \{1,2,5,9,10\}, \{1,2,6,7,8\}, \{1,2,6,7,9\}, \{1,2,6,7,10\}, \\ &\{1,2,6,8,9\}, \{1,2,7,8,10\}, \{1,2,7,9,10\}, \{1,3,4,5,8\}, \{1,3,4,6,8\}, \{1,3,4,6,10\}, \{1,3,4,7,9\}, \\ &\{1,3,4,9,10\}, \{1,3,5,6,7\}, \{1,3,5,6,10\}, \{1,3,5,7,8\}, \{1,3,5,7,9\}, \{1,3,5,8,10\}, \{1,3,5,9,10\}, \\ &\{1,3,6,7,8\}, \{1,3,6,7,9\}, \{1,3,6,7,10\}, \{1,3,6,8,10\}, \{1,3,6,9,10\}, \{1,3,7,8,9\}, \{1,3,7,9,10\}, \\ &\{1,4,5,6,8\}, \{1,4,5,7,8\}, \{1,4,5,7,9\}, \{1,4,5,8,9\}, \{1,4,5,8,10\}, \{1,4,5,9,10\}, \{1,4,6,7,8\}, \\ &\{1,4,6,7,9\}, \{1,4,6,8,9\}, \{1,4,6,8,10\}, \{1,4,7,8,9\}, \{1,4,7,9,10\}, \{1,5,6,8,10\}, \{1,5,6,9,10\}, \\ &\{1,5,7,8,10\}, \{1,5,7,9,10\}, \{1,5,8,9,10\}, \{2,3,4,5,6\}, \{2,3,4,5,7\}, \{2,3,4,5,8\}, \{2,3,4,5,9\}, \\ &\{2,3,4,5,10\}, \{2,3,4,6,10\}, \{2,3,4,7,10\}, \{2,3,4,8,9\}, \{2,3,4,8,10\}, \{2,3,4,9,10\}, \{2,3,5,6,7\}, \\ &\{2,3,5,6,9\}, \{2,3,5,7,8\}, \{2,3,5,8,9\}, \{2,3,6,8,9\}, \{2,3,7,8,9\}, \{2,3,7,8,10\}, \{2,3,8,9,10\}, \\ &\{2,4,5,6,7\}, \{2,4,5,6,9\}, \{2,4,5,7,10\}, \{2,4,6,7,10\}, \{2,4,6,8,9\}, \{2,4,7,8,10\}, \{2,4,7,9,10\}, \\ &\{2,5,6,7,9\}, \{2,5,6,7,10\}, \{2,5,6,8,9\}, \{2,5,6,9,10\}, \{2,5,7,8,10\}, \{2,5,8,9,10\}, \{2,6,7,8,9\}, \\ &\{2,6,7,8,10\}, \{2,6,8,9,10\}, \{2,7,8,9,10\}, \{3,4,5,6,7\}, \{3,4,5,6,10\}, \{3,4,5,7,8\}, \{3,4,5,7,9\}, \\ &\{3,4,6,7,10\}, \{3,4,6,8,9\}, \{3,4,6,8,10\}, \{3,4,6,9,10\}, \{3,4,7,8,9\}, \{3,4,7,9,10\}, \{3,5,6,7,8\}, \\ &\{3,5,6,7,9\}, \{3,5,6,7,10\}, \{3,5,6,9,10\}, \{3,5,7,8,9\}, \{3,5,7,8,10\}, \{3,7,8,9,10\}, \{4,5,6,7,8\}, \\ &\{4,5,6,8,9\}, \{4,5,6,9,10\}, \{4,5,7,8,10\}, \{4,6,7,8,9\}, \{4,6,7,8,10\}, \{4,6,8,9,10\}, \{4,7,8,9,10\}, \\ &\{5,6,7,9,10\}, \{5,6,8,9,10\}, \{5,7,8,9,10\}\}
       \end{split}
\end{equation*}
\end{tiny}

\end{document}